%
%

\documentclass[11pt,reqno,tbtags]{amsart}

\usepackage{amsthm,amssymb,amsfonts,amsmath}
\usepackage{amscd} 
\usepackage[numbers, sort&compress]{natbib} 
\usepackage{subfigure, graphicx}
\usepackage{paralist}
\usepackage{hyperref}
\usepackage{color}
\usepackage[normalem]{ulem}
\usepackage{stmaryrd} 
\usepackage[refpage,noprefix,intoc]{nomencl} 

\providecommand{\R}{}
\providecommand{\Z}{}
\providecommand{\N}{}

\renewcommand{\R}{\mathbb{R}}
\renewcommand{\Z}{\mathbb{Z}}
\renewcommand{\N}{{\mathbb N}}


\newcommand{\E}[1]{{\mathbf E}\left[#1\right]}

\newcommand{\va}{{\mathbf{Var}}}
\newcommand{\p}[1]{{\mathbf P}\left\{#1\right\}}

\newcommand{\I}[1]{{\mathbf 1}_{[#1]}}
\newcommand{\set}[1]{\left\{ #1 \right\}}
\newcommand{\Cprob}[2]{\mathbf{P}\set{\left. #1 \; \right| \; #2}} 
\newcommand{\probC}[2]{\mathbf{P}\set{#1 \; \left|  \; #2 \right. }}
\newcommand{\Cexp}[2]{\mathbf{E}\set{\left. #1 \; \right| \; #2}} 
\newcommand{\Cvar}[2]{\mathbf{Var}\set{\left. #1 \; \right| \; #2}} 

\newcommand{\phat}[1]{\ensuremath{\hat{\mathbf P}}\left\{#1\right\}}
\newcommand{\Cprobhat}[2]{\ensuremath{\hat{\mathbf{P}}\set{\left. #1 \; \right| \; #2}}}



\newcommand\cF{\mathcal F}

\newcommand\cP{\mathcal P}

\newcommand\cS{{\mathcal S}}
\newcommand\cT{{\mathcal T}}
\newcommand\cU{{\mathcal U}}



\newcommand{\be}{\mathbf{e}}

\newcommand{\bV}{\mathbf{V}}



\newcommand{\convdist}{\ensuremath{\stackrel{\mathrm{d}}{\rightarrow}}}

\newcommand{\pran}[1]{\left(#1\right)}
\providecommand{\eps}{}
\renewcommand{\eps}{\varepsilon}
\providecommand{\ora}[1]{}
\renewcommand{\ora}[1]{\overrightarrow{#1}}
\newcommand\urladdrx[1]{{\urladdr{\def~{{\tiny$\sim$}}#1}}} 
\DeclareRobustCommand{\SkipTocEntry}[5]{} 


\newtheorem{thm}{Theorem}
\newtheorem{lem}[thm]{Lemma}
\newtheorem{prop}[thm]{Proposition}
\newtheorem{cor}[thm]{Corollary}

\newtheorem{fact}[thm]{Fact}

\numberwithin{equation}{section}
\numberwithin{thm}{section}


\makenomenclature										
\setlength{\nomitemsep}{-\parsep}							



\usepackage{setspace}
\setstretch{1.1}
\newcommand{\rw}{\mathrm{w}}
\newcommand{\rK}{\mathrm{K}}

\newcommand{\rU}{\mathrm{U}}
\newcommand{\core}{\mathop{\mathrm{core}}}
\newcommand{\lr}[1]{\stackrel{#1}{\longleftrightarrow}}
\newcommand{\hull}[2]{#1\langle #2 \rangle}
\newcommand{\dist}{\mathop{\mathrm{dist}}}
\begin{document}

\title{Multi-source invasion percolation on the complete graph} 
\author{Louigi Addario-Berry}
\author{Jordan Barrett}
\address{Department of Mathematics and Statistics, McGill University, Montr\'eal, Canada}
\email{louigi.addario@mcgill.ca}
\urladdrx{http://problab.ca/louigi/}
\email{jordan.barrett@mail.mcgill.ca}
\urladdrx{https://www.math.mcgill.ca/jbarrett/}
\date{August 12, 2022} 

\subjclass[2010]{Primary: 60K35; Secondary: 60C05, 05C80, 82B43, 82C43} 

\begin{abstract} 
We consider invasion percolation on the randomly-weighted complete graph $K_n$, started from some number $k(n)$ of distinct source vertices. The outcome of the process is a forest consisting of $k(n)$ trees, each containing exactly one source. Let $M_n$ be the size of the largest tree in this forest. Logan, Molloy and Pralat \cite{logan18variant} proved that if $k(n)/n^{1/3} \to 0$ then $M_n/n \to 1$ in probability. In this paper we prove a complementary result: if $k(n)/n^{1/3} \to \infty$ then $M_n/n \to 0$ in probability. This establishes the existence of a phase transition in the structure of the invasion percolation forest around $k(n) \asymp n^{1/3}$. 

Our arguments rely on the connection between invasion percolation and critical percolation, and on a coupling between multi-source invasion percolation with differently-sized source sets. A substantial part of the proof is devoted to showing that, with high probability, a certain fragmentation process on large random binary trees leaves no components of macroscopic size. 
\end{abstract}

\maketitle



\section{\bf Introduction}

Fix a locally finite weighted graph $G=(v(G),e(G),\rw)$ such that $\rw:e(G)\to (0,\infty)$ is injective. The {\em invasion percolation} process on $G$ works as follows. 
\begin{itemize}
\item Fix a finite starting set $\cS \subseteq v(G)$, and let $\cS_0=\cS$. 
\item For $1 \le i < |v(G)|+1-|\cS|$, let $e_i\in E$ be the smallest-weight edge from $\cS_{i-1}$ to the rest of the graph. That is, $e_i=uv$ minimizes 
\[
\{\rw_e: e=uv, u \in \cS_{i-1},v \not \in \cS_{i-1}\}.
\]
\item Let $v_i=v$, and set $\cS_i=\cS_{i-1}\cup \{v_i\}$. 
\end{itemize}
Write $F(G,\cS)=(v(F(G,\cS)),e(F(G,\cS)))$ for the subgraph of $G$ with vertex set $\cS\cup \{v_i,0 \le i < |v(G)|+1-|\cS|\}$ and edge set $\{e_i,1 \le i < |v(G)+1-|\cS|\}$. 
Since each edge added by invasion percolation connects to a vertex not incident to any previous edge, the result $F(G,\cS)$ of the invasion percolation is a forest with $|\cS|$ connected components, in which each of the elements of $\cS$ lies in a distinct connected component of $F_{\cS}$. 

Invasion percolation was introduced in \cite{chandler_koplik_lerman_willemsen_1982}, and independently (with a slightly different formulation, using vertex rather than edge weights) in \cite{MR725616}. The latter paper, which coined the term ``invasion percolation'', considered the process on 2- and 3-dimensional lattice rectangles, with the starting set given by the vertices of one boundary side (or boundary face), and with independent random Uniform$[0,1]$ weights. 

The behaviour of invasion percolation with random weights is known to be closely linked to that of {\em critical percolation} on the corresponding graph, and indeed, invasion percolation is one of the simplest examples of self-organized criticality in random systems \cite{Stark:1991aa}. 

Invasion percolation has been extensively studied in the probability and statistical physics communities: on lattices \cite{MR1372437,MR1316507,MR2962082,MR2350578,MR3857861}, on trees \cite{MR708864,MR2393988,MR3059198,MR2977982,MR3940761}, and in the mean-field or general graph setting \cite{MR1340039,MR3857861,MR2023650,ablocal,MR1611522}.  However, past work has almost exclusively focussed on invasion percolation run from a single starting vertex. 

The purpose of this paper is to study mean-field invasion percolation run from starting sets of variable sizes. We establish a phase transition in the structure of the resulting forest, depending on the size of the starting set. 
Write $\rK_n=([n],{[n] \choose 2},\rU)$ for the randomly-weighted complete graph, 
with vertex set $[n]:=\{1,\ldots,n\}$, edge set ${[n] \choose 2}:=\{e \subset [n]: |e|=2\}$, and independent Uniform$[0,1]$ edge weights $\rU=\{U(e),e \in {{[n] \choose 2}}\}$. 
\begin{thm}\label{thm:main}
Fix positive integers $(k(n),n \ge 1)$, and for $n \ge 1$ let $M_n$ 
be the size of the largest connected component of $F(\rK_n,[k(n)])$. 
\begin{itemize}
\item If $k(n)/n^{1/3} \to 0$ then $M_n/n \to 1$ in probability. 
\item If $k(n)/n^{1/3} \to \infty$ then $M_n/n \to 0$ in probability. 
\end{itemize}
\end{thm}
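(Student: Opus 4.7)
The first statement is due to Logan, Molloy, and Pralat \cite{logan18variant}, so I focus on the second.

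My first step is a monotone coupling with the minimum spanning tree. Since each edge added by multi-source invasion percolation is the minimum-weight edge across the cut $(\cS_{i-1}, [n] \setminus \cS_{i-1})$, the cut property gives $F(\rK_n, \cS) \subseteq T_n$ where $T_n := \mathrm{MST}(\rK_n)$. Hence $F(\rK_n, \cS)$ is obtained from $T_n$ by deleting exactly $|\cS| - 1$ edges, and one can check that these are precisely the edges obtained by the following procedure: process the edges of $T_n$ in decreasing order of weight, and delete each edge whose removal (in the current forest) separates two vertices of $\cS$. Equivalently, a vertex $v$ ends up in the tree of the source $s$ minimizing the bottleneck distance $\max_{e \in \mathrm{path}_{T_n}(s,v)} \rw(e)$. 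This description yields a monotone coupling: if $\cS \subseteq \cS'$, the components of $F(\rK_n, \cS')$ refine those of $F(\rK_n, \cS)$, so $M_n$ is non-increasing in $|\cS|$. It therefore suffices to prove $M_n/n \to 0$ for any convenient sequence with $k(n)/n^{1/3} \to \infty$, e.g.\ $k(n) = \lceil n^{1/3} \log n \rceil$.

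The second step uses the large-scale structure of $T_n$ coming from critical percolation: with high probability $T_n$ contains a ``skeleton'' of size $\Theta(n^{2/3})$ built from the largest clusters of $G(n, 1/n)$ together with the low-weight edges connecting them, decorated by random ``bushes'' of average size $\Theta(n^{1/3})$ summing to $\Theta(n)$ total mass. Crucially, the highest-weight edges of $T_n$---and hence the $k-1$ cuts identified in the first step---lie inside the skeleton. By exchangeability, the sources $[k]$ are uniformly distributed on $V(T_n)$, so when $k = \omega(n^{1/3})$ a diverging number $\Theta(k n^{-1/3})$ of them fall on the skeleton. Each component of $F(\rK_n, [k])$ is then a skeleton-fragment equipped with its attached bushes, and to prove $M_n/n \to 0$ it suffices to show that no such fragment accumulates total bush-mass exceeding $\eps n$, with high probability, for any fixed $\eps > 0$.

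The final and most demanding step is the analysis of this fragmentation. The skeleton is, to leading order, a large random binary tree (for instance a critical Galton-Watson tree with Poisson$(1)$ offspring conditioned on its total size). Using its scaling limit (a version of the Brownian continuum random tree at scale $n^{-1/3}$) together with concentration estimates for the attached bush sizes, I would aim to show that with high probability every fragment has sub-skeleton size $o(n^{2/3})$ and collects total bush-mass $o(n)$. This is the ``fragmentation process on large random binary trees'' highlighted in the abstract, and also the main obstacle. The difficulty is two-fold: the cuts are not uniform on the skeleton (they obey a max-weight-edge rule along each Steiner path between sources), and the bush sizes have heavy tails at the $n^{1/3}$ scale, so controlling the fragment geometry and the mass accumulation simultaneously requires delicate second-moment and concentration arguments.
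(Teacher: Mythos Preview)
Your skeleton-plus-bushes picture matches the paper's, but two steps fail as written. First, the monotonicity reduction goes the wrong way: proving $M_n/n\to 0$ for $k_0(n)=\lceil n^{1/3}\log n\rceil$ gives the result only for $k(n)\ge k_0(n)$, not for all $k(n)$ with $k(n)/n^{1/3}\to\infty$; sequences like $k(n)=n^{1/3}\log\log n$ remain unhandled, so the argument must work for arbitrarily slow growth of $k(n)/n^{1/3}$. Second, the claim that ``the highest-weight edges of $T_n$---and hence the $k-1$ cuts---lie inside the skeleton'' is false. The highest-weight MST edges attach the last isolated vertices (around $p\sim (\log n)/n$) and are bush edges; more to the point, if a source sits in a bush, the max-weight edge on its MST path to another source can be a bush edge. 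The paper fixes this by replacing $[k(n)]$ with the subset $\cS_{n,\lambda}:=[k(n)]\cap v(F^1(n,p_{n,\lambda}))$ of sources already in the skeleton; since $F^1(n,p_{n,\lambda})$ is a subtree of the MST, the MST path between any two of its vertices stays inside it, so for \emph{this} source set the cuts are indeed in the skeleton, and monotonicity (now in the correct direction) bounds $M_n$ by the largest resulting fragment mass.

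The paper also neutralises both difficulties you flag at the end, via exchangeability rather than moment bounds. Conditionally on the graph structure the edge weights are exchangeable, so the max-weight-edge rule becomes a path-breaking process under a uniformly random edge ordering; this is what makes the fragmentation of $F^1(n,p_{n,\lambda})$ tractable (Proposition~\ref{prop:critical_bound}). For the bush masses, Proposition~\ref{prop:exch_attachments} shows that $(q^v_{n,\lambda})_{v\in F^1(n,p_{n,\lambda})}$ is exchangeable given $\cF_{n,\lambda}$; combining Aldous's concentration for exchangeable sums (Proposition~\ref{prop:exc_lln}) with $\max_v q^v_{n,\lambda}\to 0$ as $\lambda\to\infty$ (Proposition~\ref{prop:biggest_tree}) then gives that any fragment occupying $o(n^{2/3})$ skeleton vertices collects $o(n)$ total bush mass. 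No tail or second-moment analysis of individual bush sizes is needed.
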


\noindent {\bf Remarks.}

\smallskip
\noindent {$\star$} By the symmetries of the model, the starting set $[k(n)]$ could be replaced by any other set $\cS(n)$ of size $k(n)$ and the same result would hold. 

\smallskip
\noindent {$\star$} The first assertion of the theorem, that if $k(n)/n^{1/3} \to 0$ then $M_n/n \to 1$ in probability, was proved in \cite{logan18variant}. That work also proved that $M_n/n \to 0$ in probability provided that $k(n)/(n^{1/3}(\log n)^{4/3}(\log\log n)^{1/3}) \to \infty$, and provided more quantitative upper bounds on $M_n$ for such values of $k(n)$. Thus, the main contribution of this work is to pin down the location of the phase transition in the behaviour of $M_n$ to $k(n)$ of order precisely $n^{1/3}$. 

\smallskip
\noindent {$\star$} We conjecture that if $k(n)/n^{1/3} \to c \in \R$ then $M_n/n$ converges in distribution to a non-degenerate limit $M_\infty(c)$. More strongly, we make the following conjecture. Write $L_{n,i}$ for the size of the $i$'th largest connected component of $F(\rK_n,[k(n)])$, with $L_{n,i}=0$ if $F(\rK_n,[k(n)])$ has fewer than $i$ connected components. 
For each $c \in \R$ there exists a random vector $(L_{\infty,i}(c),i \ge 1)$ taking values in the set $\Delta_\infty^{\downarrow}=\{(\ell_i,i \ge 1) \in (0,1)^\N: \sum_{i \ge 1} \ell_i=1\}$, such that 
if $k(n)/n^{1/3}\to c$ then $(L_{n,i}/n,i \ge 1) \convdist (L_{\infty,i}(c))$ in the sense of finite-dimensional distributions. 

\subsection{Overview of the rest of the paper}
In Section~\ref{sec:sketch}, we explain several useful connections between invasion percolation, critical percolation, and minimum spanning trees. We then use these connections to prove Theorem~\ref{thm:main}, modulo a key input to the proof. This key input, Proposition~\ref{prop:critical_bound}, roughly states the following. In the case that $k(n)/n^{1/3} \to \infty$, if we run the multi-source invasion percolation process for $n/2+O(n^{2/3})$ steps, then the size of the largest tree is with high probability much smaller than the size of the largest component in an Erd\H{o}s--R\'enyi random graph process run for the same number of steps (which precisely builds a critical Erd\H{o}s--R\'enyi random graph). 

The proof of Proposition~\ref{prop:critical_bound}, which occupies the bulk of the paper, appears in Section~\ref{sec:critical_bd_proof}. It makes use of the connections between invasion percolation and critical percolation, and the fact that the components of the critical Erd\H{o}s--R\'enyi random graph are with high probability treelike, to reduce the analysis to that of a fragmentation process on large random binary trees. 

Finally, Section~\ref{sec:conc} proposes some future research directions suggested by the current work. 

\section{\bf A sketch proof of Theorem~\ref{thm:main}.}\label{sec:sketch}

\subsection{\bf Invasion percolation, Prim's algorithm, and Kruskal's algorithm}\label{sec:prim_kruskal}

Suppose that $G=(v(G),e(G),\rw)$ is a finite graph. If $\cS=\{v\}$ consists of a single vertex $v \in v(G)$, then invasion percolation is equivalent to {\em Prim's algorithm} \cite{prim57shortest} started from $v$, and $F(G,\cS)$ is thus the minimum-weight spanning tree (MST) of the weighted graph $G$. 
If $\cS$ consists of more than one vertex, the invasion percolation process can still be viewed as a form of Prim's algorithm, as follows. Augment $G$ by adding a new vertex $\rho$ and edges from $\rho$ to all elements of $\cS$. Fix $0 < \eps < \min(\rw_e,e \in e(G))$ and augment $\rw$ by  giving 
the edges $\{\rho x,x \in \cS\}$ each a distinct weight less than $\eps$. Write $G'_{\cS}=(v(G'_{\cS}),e(G'_{\cS}),\rw')$ for the augmented graph. Then invasion percolation on $G'_{\cS}$ with starting set $\{\rho\}$ will first add edges $\{\rho x,x \in \cS\}$, and will then add the same edges as invasion percolation on $G$ with starting set $\cS$, in the same order. It follows that the subgraph of $F(G'_{\cS},\rho)$ obtained by removing $\rho$ and its incident edges is precisely $F(G,\cS)$. 

In the setting of finite graphs, an alternative construction of $F(G,\cS)$ is given by {\em Kruskal's algorithm} \cite{kruskal56mst}, which works as follows.
Write $m=|e(G)|$ and list the edges of $G$ in increasing order of weight as $e(1),\ldots, e(m)$. 
Let $F_0=F_0^{G,\cS}=(v(G),\emptyset)$. Then, for $1 \le i \le m$: 
\begin{itemize}
\item If $e(i)=u(i)v(i)$ joins distinct connected components of $F_{i-1}$, and $u(i)$ and $v(i)$ do not both lie in components containing elements of $\cS$, then set $F_i=F_{i-1}+e(i):=(v(F_{i-1}),e(F_{i-1})\cup \{e(i)\})$. 
\item Otherwise, set $F_i=F_{i-1}$. 
\end{itemize}
The output of Kruskal's algorithm is the forest $F_m=F_m^{G,\cS}$. To see that $F_m=F(G,\cS)$, it suffices to consider running Kruskal's algorithm on the augmented graph $G'_{\cS}$ defined above. The result is the MST of $G'_{\cS}$, and is therefore equal to $F(G'_{\cS},\{\rho\})$. However, Kruskal's algorithm run on $G'_{\cS}$ and $\{\rho\}$ will begin by adding the edges $\rho x$ for $x \in \cS$, since these edges have lower weight than all other edges in $G'_{\cS}$. Once these edges are added, the vertices of $\cS$ all lie in a single connected component, so the remaining steps of Kruskal's algorithm run on $G'_{\cS}$ and $\{\rho\}$ add the same edges as Kruskal's algorithm run on $G$ and $\cS$, in the same order. It follows that $F_m$ can be obtained from $F(G'_{\cS},\{\rho\})$ by removing $\rho$ and its incident edges. We saw using Prim's algorithm that performing this operation to $F(G'_{\cS},\{\rho\})$ yields $F(G,\cS)$, and so indeed $F_m^{G,\cS}=F(G,\cS)$. 

It will be useful that the above construction couples the processes $(F_i^{G,\cS},0 \le i \le m)$ for different starting sets $\cS$: if $\cS'\subset \cS$ then $F_i^{G,\cS}$ is a subgraph of $F_i^{G,\cS'}$ for all $0 \le i \le m$. More specifically, suppose that $\cS=\cS' \cup \{z\}$ for some fixed $z\in v(G) \setminus \cS'$. Let $v \in \cS'$ be the unique element of $\cS'$ in the same component of $F_m^{G,\cS'}$ as $z$, and let $e(j)$ be the largest-weight edge on the path from $v$ to $z$ in $F_m^{G,\cS'}$. Then 
\begin{equation}\label{eq:coupling_edge}
F_i^{G,\cS} = \begin{cases}
				F_i^{G,\cS'}		&\mbox{ if }i < j\\
				F_i^{G,\cS'}-e(j)&\mbox{ if }i \ge j\, .
				\end{cases}
\end{equation}

\subsection{Kruskal's algorithm and the Erd\H{o}s--R\'enyi process}
There is a second useful coupling, between $(F_i^{G,\cS},0 \le i \le m)$ and a graph process which does not forbid cycles but maintains the condition that vertices in the starting set $\cS$ are not allowed to join the same connected component. The restricted process, which we call the {\em Erd\H{o}s--R\'enyi process} and denote $(G_i,0 \le i \le m)=(G_i^{\cS},0 \le i \le m)$, works as follows. List the edges of $G$ in increasing order of edge weight as $(e(i),0 \le i \le m)$. For $1 \le i \le m$, if the edge $e(i)$ joins connected components of $G_{i-1}$ containing distinct elements of $\cS$ then set $G_i=G_{i-1}=([n],e(G_{i-1}))$; otherwise, set $G_i=G_{i-1}+e(i)$. 
The final graph $G_{m}$ consists of $|\cS|$ connected components, each containing exactly one of the vertices of $\cS$. 

The orderings of edges in Kruskal's algorithm and in the Erd\H{o}s--R\'enyi process are identical. Moreover, if the same starting set $\cS$ is used for both processes, then the only edges which are added by the Erd\H{o}s--R\'enyi process but not by Kruskal's algorithm join vertices which already lie in the same connected component. 
It follows that $F_i^{G,\cS}$ and $G_i^{\cS}$ have the same connected components for all $1 \le i \le m$. (More strongly, for each connected component $C$ of $G_i^{\cS}$, the corresponding component of $F_i^{G,\cS}$ is the minimum weight spanning tree of $C$.) 
In particular, this yields that the size of the largest connected component is the same in $F(G,\cS)$ and in $G_{m}^{\cS}$. 

To justify the name ``Erd\H{o}s--R\'enyi process'', note that if $G=\rK_n$ is the randomly-weighted complete graph and $|\cS|=1$, then $(G_i^\cS,0 \le i \le m)=(G_i^\cS,0 \le i \le {n \choose 2})$ is precisely the classical Erd\H{o}s--R\'enyi random graph process, in which the edges of the complete graph are added one-at-a-time in exchangeable random order. 

\subsection{The critical random graph and the proof of Theorem~\ref{thm:main}}
We now specialize to the setting of this paper,  the randomly-weighted complete graph $\rK_n$. 
It is useful to continuize both the Erd\H{o}s--R\'enyi process and Kruskal's algorithm; write 
 $(G(n,\cS,p),0 \le p \le 1)$ for the random graph process in which  $G(n,\cS,p)$ has vertex set $[n]$ and edge set 
 \[
 \Big\{e \in e\big(\rK_{n,{n \choose 2}}^{\cS}\big): U_{e} \le p\Big\}\, ,
 \] 
 and $(F(n,\cS,p),0 \le p \le 1)$ for the process in which  $F(n,\cS,p)$ has vertex set $[n]$ and edge set 
 \[
 \Big\{e \in e\big(F_{n \choose 2}^{\rK_n,\cS}\big): U_{e} \le p\Big\}\, . 
 \]
 The continuous-time processes add the same edges as the discrete processes, and in the same order. 
More strongly, $G(n,\cS,U_{e(i)})=\rK_{n,i}^{\cS}$ for all $1 \le i \le {n \choose 2}$, and $(G(n,\cS,p),0 \le p \le 1)$ is constant except at times $(U_i,1 \le i \le {n \choose 2})$; the corresponding relation holds for the discrete- and continuous-time Kruskal processes.

When $|\cS|=1$ we omit $\cS$ from the notation, writing, e.g., $G(n,p)$ rather than $G(n,\cS,p)$, as in this case the processes do not in fact depend on $\cS$. Note that $F(n,1)$ is then the MST of $\rK_n$. 

The relation~\eqref{eq:coupling_edge} implies that for any $p \in (0,1)$ and $\cS\subset [n]$, the connected components of $F(n,\cS,p)$ refine those of $F(n,p)$, in that for any component $C$ of $F(n,p)$ the vertex set of $C$ may be written as a union of the vertex sets of components of $F(n,\cS,p)$. Since $F(n,\cS,p)$ and $G(n,\cS,p)$ have the same components for all $\cS \subset [n]$ and $p \in [0,1]$, the same fact holds for $G(n,\cS,p)$ and $G(n,p)$.

The heart of the proof that $M_n/n \to 0$ in probability when $k(n)/n^{1/3}\to\infty$ 
consists in establishing that in the {\em critical window} of the Erd\H{o}s-R\'enyi process, when $p=1/n+O(1/n^{4/3})$, the connected components of $F(n,[k(n)],p)$ all have size $o(n^{2/3})$ with high probability. For $\lambda \in \R$, write 
\[
p_{n,\lambda}=1/n+\lambda/n^{4/3}\, .
\] 
\begin{prop} \label{prop:critical_bound}
Fix positive integers $(k(n),n \ge 1)$ with $k(n) \in [n]$ and $k(n)/n^{1/3} \to \infty$. Next, fix $\lambda \in \R$, and let $M_{n,\lambda}(k(n))$ be the size of the largest connected component of $F(n,[k(n)],p_{n,\lambda})$. Then $M_{n,\lambda}(k(n))/n^{2/3} \to 0$ in probability. 
\end{prop}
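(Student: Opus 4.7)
The plan is to exploit the facts from Section~\ref{sec:prim_kruskal} that $F(n,[k(n)],p)$ and $G(n,[k(n)],p)$ share the same connected components, and that these refine those of $G(n,p)$, in order to reduce the proposition to a fragmentation problem inside each component of the critical Erd\H{o}s--R\'enyi graph $G(n,p_{n,\lambda})$. Standard scaling theory for critical Erd\H{o}s--R\'enyi implies that for any fixed $\eps>0$, with probability tending to one, $G(n,p_{n,\lambda})$ has only $O(1)$ components of size at least $\eps n^{2/3}$, and each such component is a tree plus a bounded number of surplus edges. It therefore suffices to show that, for each macroscopic component $C$, the restriction of $F(n,[k(n)],p_{n,\lambda})$ to $C$ has no subtree of size $\geq \eps n^{2/3}$ with high probability.

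Next I would describe the fragmentation inside a fixed macroscopic component $C$ of size $s\asymp n^{2/3}$. Let $T$ denote the restriction of $F(n,p_{n,\lambda})$ to $C$, which up to $O(1)$ surplus edges equals the minimum spanning tree of $C$, and let $\cS_C=C\cap[k(n)]$. Iterating the edge-removal coupling~\eqref{eq:coupling_edge} by adding sources to $C$ one at a time shows that $F(n,[k(n)],p_{n,\lambda})\cap C$ is obtained from $T$ by deleting exactly $|\cS_C|-1$ edges: the boundary edges of the Voronoi partition of $T$ under the max-edge ultrametric with centres $\cS_C$. Conditional on $|C|=s\asymp n^{2/3}$, the size $|\cS_C|$ is approximately $\mathrm{Binomial}(s,k(n)/n)$, so $|\cS_C|\gtrsim sk(n)/n\asymp k(n)/n^{1/3}\to\infty$, with the sources placed approximately uniformly inside $C$. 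Thus, inside every macroscopic component, a diverging number of uniformly placed sources must fragment a treelike graph.

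The problem therefore reduces to proving that the Voronoi partition of (what is essentially) the MST of a critical component of size $s$, with $j\to\infty$ uniformly chosen sources and i.i.d.\ uniform edge weights, produces only pieces of size $o(s)$. The strategy is to couple this MST to a uniformly random binary tree of the appropriate size equipped with i.i.d.\ uniform edge weights, using known scaling-limit results for the MSTs of critical components, and then to analyse the fragmentation on this simpler random-binary-tree model; this is the input that Section~\ref{sec:critical_bd_proof} is devoted to establishing. I expect this to be the main obstacle: the Voronoi cells depend in a delicate way on both the random tree topology and the edge weights, since a single atypically-weighted cut high in the tree can preserve an entire macroscopic subtree even when many sources are present. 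A natural line of attack is to argue inductively across dyadic scales, showing that every subtree of linear size contains sufficiently many sources that the Voronoi construction must introduce a cut splitting it into pieces of size bounded away from its total, and then iterating to preclude macroscopic cells at every scale. Pulling the resulting fragmentation estimate back through the scaling limit and taking a union bound over the $O(1)$ macroscopic components of $G(n,p_{n,\lambda})$ then yields $M_{n,\lambda}(k(n))/n^{2/3}\to 0$ in probability.
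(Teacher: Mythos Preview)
Your high-level reduction matches the paper's exactly: restrict to the $O(1)$ macroscopic components of $G(n,p_{n,\lambda})$, each with bounded surplus and each containing a diverging number of approximately uniformly placed sources, and show the induced fragmentation leaves no piece of linear size. Your Voronoi-under-max-edge-ultrametric description of the fragmentation is also equivalent to what the paper uses.

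Where your proposal diverges, and where there is a real gap, is in the analysis of the fragmentation. You propose to couple the MST of a critical component to a random binary tree via scaling limits and then run a dyadic-scale induction, but this remains a sketch, and the MST of a critical component is not itself a uniform random tree, so the coupling you envisage is not immediate. The paper takes a different and more concrete route. It works not with the MST of $C$ but with $C$ itself: conditional on its size $q$ and surplus $s$, the component is a \emph{uniformly random} connected graph with those parameters, and its edge weights are exchangeable. The fragmentation then becomes a \emph{path-and-cycle-breaking process} in which edges are examined in uniformly random order and removed if they lie on a cycle or on a path between two sources. For $s=0$ this is path-breaking on a uniform random labelled tree with a uniform random edge ordering; the case $s\ge1$ is reduced to $s=0$ via a kernel decomposition.

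For the tree case the paper does not use any multiscale argument. Instead it bounds $\mathbf{P}\{X\leftrightarrow Y\}$ for two independent uniform vertices, which controls the squared relative size of the largest fragment. The bound comes from Aldous's line-breaking construction of the spanning subtree on $\{X,Y\}\cup[r]$: of the $r$ source paths, roughly $\sqrt{r}$ attach directly to the $X$--$Y$ path in each of its thirds, with lengths at most $\sqrt{q}$. A size-biased ordering argument then shows that, with probability $1-o_r(1)$, the middle third of the $X$--$Y$ path is targeted by the random edge ordering before all but two of these attached source paths have been hit, which forces a cut separating $X$ from $Y$. This explicit combinatorial mechanism is the content your sketch does not supply.
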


The proof of Proposition~\ref{prop:critical_bound} appears in Section~\ref{sec:critical_bd_proof}. 
To prove Theorem~\ref{thm:main}, we combine this proposition with the following two pre-existing results about the structure of the minimum spanning tree of $\rK_n$. 
For $p \in [0,1]$, write $F^1(n,p)$ for the largest connected component of $F(n,p)$, with ties broken uniformly at random. Fix $\lambda \in \R$, and consider the forest obtained from the minimum spanning tree, $F(n,1)$, by removing the edges of $F^1(n,p_{n,\lambda})$. 
For each vertex $v$ of $F^1(n,p_{n,\lambda})$, write $T^n_{v,\lambda}$ for the tree of this forest containing $v$; see Figure~\ref{fig:forest_decomp}. Let $q^v_{n,\lambda}=|T^n_{v,\lambda}|/n$ be the proportion of vertices of $F(n,1)$ lying in $T^n_{v,\lambda}$. 
\begin{figure}[htb]
\[
\includegraphics[width=0.6\textwidth]{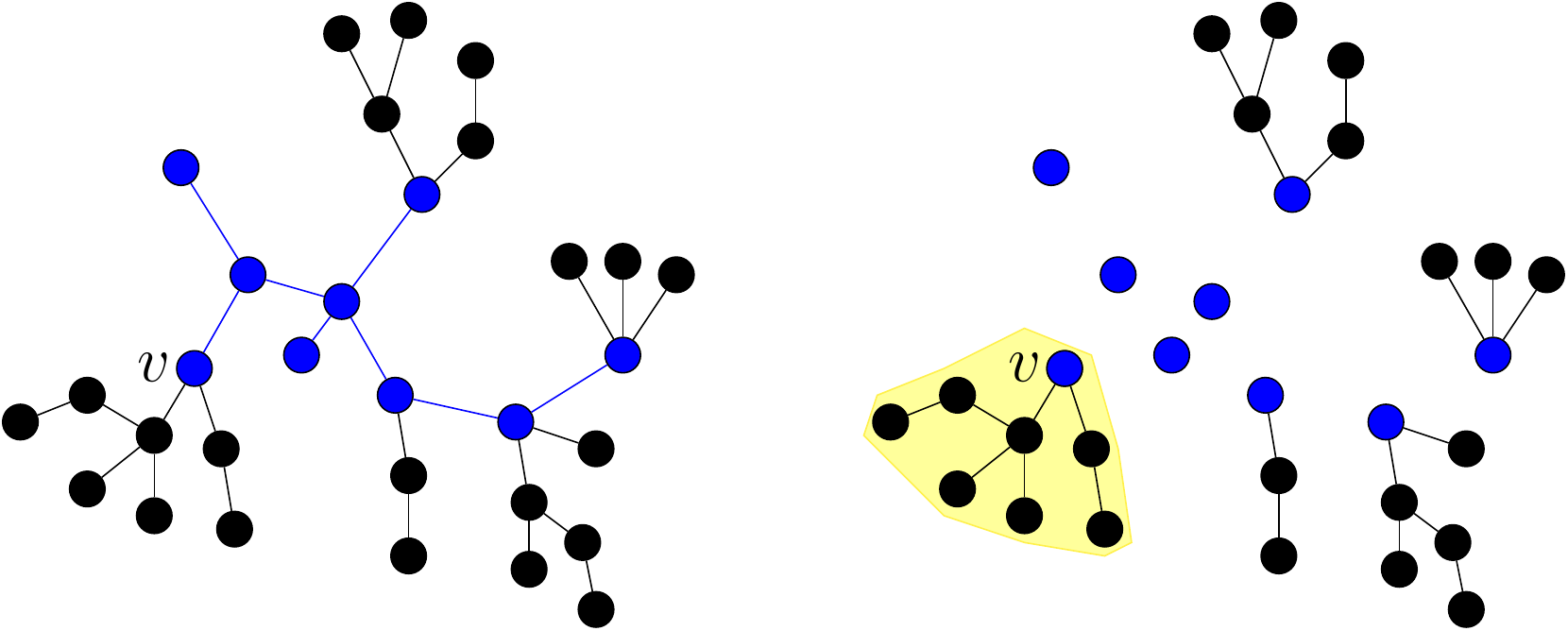}
\]
\caption{Left: An instantiation of $F(n,1)$ with $F^1(n,p_{n,\lambda})$ drawn in blue. Right: the forest obtained from $F(n,1)$ by removing the edges of $F^1(n,p_{n,\lambda})$. On the right the tree $T^n_{v,\lambda}$ is highlighted.}
\label{fig:forest_decomp}
\end{figure}
\begin{prop}[\cite{MR3706739}, Lemma 4.11]\label{prop:biggest_tree}
Write $\Delta^n_\lambda = \max(q^v_{n,\lambda}, v \in F^1(n,p_{n,\lambda}))$. Then for all $\delta > 0$, 
\[
\lim_{\lambda \to \infty}\limsup_{n \to \infty} \p{\Delta^n_\lambda > \delta} = 0\, .
\]
\end{prop}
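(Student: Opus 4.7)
The plan is to introduce an auxiliary sprinkling scale $\Lambda \gg \lambda$ and a post-critical cutoff $p = c/n$ for a large constant $c$, decompose each tree $T^n_{v,\lambda}$ accordingly into three pieces, and handle the remaining delicate piece by appealing to the scaling limit of the MST.

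Fix $\delta>0$. By standard critical-window results and the Kruskal coupling from Section~\ref{sec:prim_kruskal}, with probability tending to $1$ as $n\to\infty$, for $\lambda,\Lambda$ large one has $F^1(n, p_{n,\lambda}) \subseteq F^1(n, p_{n,\Lambda}) \subseteq F^1(n, c/n)$ as subtrees of $F(n,1)$. Decompose each $T^n_{v,\lambda}$ along the partition
\[
[n] = F^1(n,p_{n,\Lambda}) \;\sqcup\; (F^1(n,c/n)\setminus F^1(n,p_{n,\Lambda})) \;\sqcup\; ([n]\setminus F^1(n,c/n)).
\]
The first piece contributes at most $|F^1(n,p_{n,\Lambda})| = O_\Lambda(n^{2/3}) = o(n)$ to each $T^n_{v,\lambda}$, using Aldous's multiplicative coalescent scaling limit. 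The third piece contributes at most $(1-\rho(c))n \le 2e^{-c}n$ to each $T^n_{v,\lambda}$, using standard supercritical estimates; this is at most $\delta n/2$ once $c=c(\delta)$ is chosen large.

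The middle piece, comprising vertices absorbed into the giant during $(p_{n,\Lambda}, c/n]$, is the main obstacle. Its total mass is $\Theta(n)$ and it is distributed across the $\Theta(\lambda n^{2/3})$ attachment points of $F^1(n,p_{n,\lambda})$. To show each attachment point collects only $o(n)$, I appeal to the Gromov--Hausdorff--Prokhorov scaling limit of the MST: under the rescaling $n^{-1/3}$, $F(n,1)$ converges to a compact continuum tree $\cM$ equipped with a diffuse probability mass measure $\nu$, and jointly the subtrees $F^1(n,p_{n,\lambda})$ converge to an increasing family $(\cM_\lambda)_\lambda$ of subtrees of $\cM$ with $\bigcup_\lambda \cM_\lambda$ dense in $\cM$ (this density is the continuum analogue of the fact that Kruskal's process eventually uses every MST edge). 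Under this convergence, $\Delta^n_\lambda$ converges in distribution to $\Delta^\infty_\lambda$, the maximum $\nu$-mass of a connected component of $\cM \setminus \cM_\lambda$. Non-atomicity of $\nu$ together with density of $\bigcup_\lambda \cM_\lambda$ forces $\Delta^\infty_\lambda \to 0$ almost surely as $\lambda \to \infty$: any subsequential limit of positive-mass components would have to be disjoint from every $\cM_\lambda$, hence contained in a closed $\nu$-null set.

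The hard part, consequently, is verifying the joint GHP convergence of $(F(n,1), F^1(n, p_{n,\lambda}))_\lambda$ with enough continuity that the max-hanging-component-mass functional passes to the limit. An alternative finite-$n$ route would dyadically partition the interval $(p_{n,\Lambda}, c/n]$, using at each scale that the post-critical Kruskal merger edges are (after appropriate conditioning) approximately uniform across candidate crossing edges, and summing the resulting concentration bounds around the average $O(n^{1/3}/\lambda)$ accretion per attachment point. Either approach is delicate but fits inside the standard multiplicative coalescent and MST scaling limit framework.
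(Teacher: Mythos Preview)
This proposition is not proved in the paper; it is quoted from \cite{MR3706739}, Lemma 4.11, and used as a black box. There is no in-paper argument to compare your proposal against.

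As a standalone argument, your proposal has a genuine gap. The three-piece decomposition does essentially no work: Parts 1 and 3 together have total mass $o(n)+2e^{-c}n$, which you control trivially by bounding each tree's share by the grand total, but Part 2 carries mass $\Theta(n)$ and could in principle all hang from a single vertex of $F^1(n,p_{n,\lambda})$. So after the decomposition you are left with the original problem. For Part 2 you appeal to a joint GHP scaling limit of the MST together with the marked subtree $F^1(n,p_{n,\lambda})$, assert $\Delta^n_\lambda \convdist \Delta^\infty_\lambda$, and argue $\Delta^\infty_\lambda \to 0$ from diffuseness of $\nu$ and density of $\bigcup_\lambda \cM_\lambda$. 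You then explicitly flag the crux --- joint convergence plus continuity of the max-hanging-mass functional --- and leave it open. That step \emph{is} the proposition: the passage to the limit is only valid once this continuity is in hand, and establishing it is precisely the content of Lemma~4.11 in \cite{MR3706739} (indeed, that lemma is part of the machinery that paper uses to build the MST scaling limit, so invoking the finished scaling limit here risks circularity). Even the limiting claim $\Delta^\infty_\lambda \to 0$ is not immediate from density of $\bigcup_\lambda \cM_\lambda$ alone; you need that the hanging components shrink in $\nu$-mass, not merely that $\bigcup_\lambda \cM_\lambda$ meets every open set. Your alternative finite-$n$ route (b) is closer in spirit to how such statements are actually proved, but as written it is only a heuristic.
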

Next, let
\[
\cF_{n,\lambda} := \sigma\big(U(e)\I{U(e) \le p_{n,\lambda}},e \in e(K_n)\big)=\sigma\big(U(e)\I{e \in e(G(n,p_{n,\lambda}))},e \in e(K_n)\big)
\] 
be the $\sigma$-algebra containing all information about the weights of edges in $G(n,p_{n,\lambda})$. 
\begin{prop}[\cite{MR4288328}, Lemma 6.19]\label{prop:exch_attachments}
For every $\lambda \in \R$, conditionally given $\cF_{n,\lambda}$, the collection of random variables $(q^v_{n,\lambda},v \in F^1(n,p_{n,\lambda}))$ is exchangeable. 
\end{prop}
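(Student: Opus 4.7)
The plan is to condition on $\cF_{n,\lambda}$ and exploit the symmetry of the i.i.d.\ ``high'' edge weights together with a contract-and-lift view of the MST. Let $\cC_1 := v(F^1(n,p_{n,\lambda}))$, and let $\cC_2,\ldots,\cC_K$ be the remaining connected components of $G(n,p_{n,\lambda})$; since $\cC_1$ is a whole component, every edge between $\cC_1$ and $[n]\setminus\cC_1$ has weight strictly larger than $p_{n,\lambda}$, and conditionally on $\cF_{n,\lambda}$ the collection of high edge weights is i.i.d.\ Uniform$(p_{n,\lambda},1)$. A standard application of the cycle rule for minimum spanning trees shows that $F(n,1)$ decomposes as the union of the within-component MSTs (in particular $F^1$ on $\cC_1$) and $K-1$ bridge edges. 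To describe these bridges, form the multigraph $H$ on super-vertices $\{\cC_1,\ldots,\cC_K\}$ with parallel edge set $E_{ij} := \{uv : u\in \cC_i, v\in \cC_j\}$ inheriting the weights from $\rK_n$, set $W_{ij}:=\min_{e\in E_{ij}} U(e)$ and let $e^*_{ij}$ be the edge of $E_{ij}$ achieving this minimum, and let $\bT$ be the MST of $H$ under the super-weights $(W_{ij})$. The bridges of $F(n,1)$ are then exactly $\{e^*_{ij} : (\cC_i,\cC_j)\in \bT\}$.

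Next I would identify the trees $T^n_{v,\lambda}$ inside this picture. Let $\cC_{i_1},\ldots,\cC_{i_J}$ be the super-neighbors of $\cC_1$ in $\bT$, let $\cS_j$ be the connected component of $\bT-\cC_1$ containing $\cC_{i_j}$, set $s_j := \sum_{\cC_i \in \cS_j} |\cC_i|$, and let $v_j \in \cC_1$ be the $\cC_1$-endpoint of $e^*_{1,i_j}$. Then
\[
|T^n_{v,\lambda}| \;=\; 1 + \sum_{j : v_j = v} s_j \qquad \text{for every } v \in \cC_1.
\]
The core of the proof is then to show that, conditionally on $\cF_{n,\lambda}$ and on $\bT$, the vertices $v_1,\ldots,v_J$ are i.i.d.\ uniform on $\cC_1$. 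This rests on three observations: the edge sets $E_{ij}$ are pairwise disjoint, so conditionally on $\cF_{n,\lambda}$ the pairs $(W_{ij},e^*_{ij})$ are mutually independent across $(i,j)$; the minimum of i.i.d.\ uniforms is independent of its index, so conditional on $W_{1,i_j}$ the edge $e^*_{1,i_j}$ is uniform over $E_{1,i_j}$ and hence its $\cC_1$-endpoint is uniform on $\cC_1$; and $\bT$ is a deterministic function of $(W_{ij})$ alone, so further conditioning on $\bT$ does not bias the $e^*_{ij}$'s.

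Given this, conditionally on $(\cF_{n,\lambda},\bT)$ the tuple $(|T^n_{v,\lambda}|)_{v \in \cC_1}$ depends on $(v_1,\ldots,v_J)$ only through the counts $(\#\{j : v_j = v\})_{v \in \cC_1}$, and those counts are exchangeable in $v$ because the $v_j$'s are i.i.d.\ uniform. Since exchangeability is preserved under mixing, integrating out $\bT$ yields the required exchangeability conditional on $\cF_{n,\lambda}$ alone. The main subtle point, and where I expect the care to lie, is the third observation above: one must verify that conditioning on the super-MST $\bT$ only leaks information about the values $(W_{ij})$ and not about which edge in each $E_{ij}$ realizes the minimum. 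This is the crux that lets the whole argument proceed via symmetry, with no quantitative estimates required.
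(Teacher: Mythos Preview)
Your argument is correct, and in fact the paper does not supply its own proof: it simply cites \cite[Lemma~6.19]{MR4288328} and remarks that the proof there, stated conditionally on $G(n,p_{n,\lambda})$, carries over verbatim to conditioning on $\cF_{n,\lambda}$. Your contract-and-lift symmetry argument is precisely the natural way to establish the result, and is in the same spirit as the cited lemma.

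One small slip worth flagging: you write that the tuple $(|T^n_{v,\lambda}|)_{v\in\cC_1}$ depends on $(v_1,\ldots,v_J)$ ``only through the counts $(\#\{j:v_j=v\})_{v\in\cC_1}$''. That is not quite right, since the $s_j$ need not be equal; the tuple depends on the full assignment $j\mapsto v_j$. What you actually need (and what your three observations do give) is that, conditionally on $\cF_{n,\lambda}$ and on all the $(W_{ij})$, the $v_j$ are i.i.d.\ uniform on $\cC_1$, so that for any permutation $\pi$ of $\cC_1$ one has $(\pi^{-1}(v_j))_{j}\stackrel{d}{=}(v_j)_j$; plugging this into the identity $|T^n_{\pi(v),\lambda}|=1+\sum_{j:\pi^{-1}(v_j)=v}s_j$ yields the exchangeability directly. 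This is the argument you are clearly aiming at, and it goes through; only the phrasing about ``counts'' should be adjusted.
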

The exchangeability in \cite[Lemma~6.19]{MR4288328} is stated conditionally given $G(n,p_{n,\lambda})$, rather than given $\cF_{n,\lambda}$. In other words, in \cite{MR4288328} the conditioning is only on the graph structure of $G(n,p_{n,\lambda})$, but not on the weights of its edges. However, an essentially identical proof to that given in~\cite{MR4288328} establishes the slightly stronger statement above. 

We also require a fact about concentration of exchangeable random sums, which is a consequence of a result of Aldous \cite{MR883646}.
\begin{prop}[\cite{MR883646}, Theorem 20.7]\label{prop:exc_lln}
For all $\eps > 0$ there exists $\delta > 0$ such that the following holds. Let $(q_i,1 \le i \le m)$ be non-negative real numbers with $\sum_{1 \le i \le m}q_i=1$, and let $(\pi(i),1 \le i \le m)$ be a uniformly random permutation of $[m]$. 
If $\max_{1 \le i \le m} q_i \le \delta$ then 
\[
\p{\max_{1 \le i \le m} \left|\sum_{j=1}^i \Big(q_{\pi(j)} - \frac{1}{m}\Big)\right|>\eps} < \eps\, .
\]
\end{prop}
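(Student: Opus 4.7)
The plan is to combine a second-moment estimate at each fixed $i$ with a grid-and-interpolation argument exploiting the monotonicity in $i$ of the partial sums $S_i := \sum_{j=1}^i q_{\pi(j)}$.

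The first step is to compute the variance of $S_i$ directly. Since $(q_{\pi(j)},1 \le j \le m)$ is a uniform sample-without-replacement from the multiset $\{q_1,\ldots,q_m\}$ with per-coordinate mean $1/m$, the standard covariance calculation yields
\[
\V{S_i} = \frac{i(m-i)}{m(m-1)} \sum_{j=1}^m \left(q_j - \frac{1}{m}\right)^2 \le \frac{1}{2} \sum_{j=1}^m q_j^2 \le \frac{1}{2}\max_j q_j \le \frac{\delta}{2},
\]
where I used $\sum_j q_j^2 \le (\max_j q_j)(\sum_j q_j) = \max_j q_j$ and $i(m-i)/(m(m-1)) \le 1/2$.

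Next, I would pick a uniform grid $i_k := \lfloor km/K\rfloor$ with $K := \lceil 4/\eps\rceil$, so that $(i_{k+1}-i_k)/m \le \eps/2$ once $m$ is sufficiently large; this is automatic, since the hypothesis $\max_j q_j \le \delta$ together with $\sum_j q_j = 1$ forces $m \ge 1/\delta$. Chebyshev at each grid point combined with a union bound over the $K+1$ points yields
\[
\p{\max_{0 \le k \le K}\left|S_{i_k} - i_k/m\right| > \eps/2} \le \frac{2(K+1)\delta}{\eps^2} \le \frac{C\delta}{\eps^3}
\]
for an absolute constant $C$.

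Finally, the monotonicity of $i \mapsto S_i$ gives, for $i_k \le i \le i_{k+1}$,
\[
(S_{i_k} - i_k/m) - \tfrac{i_{k+1}-i_k}{m} \le S_i - i/m \le (S_{i_{k+1}} - i_{k+1}/m) + \tfrac{i_{k+1}-i_k}{m},
\]
so on the event that every grid deviation is at most $\eps/2$, every deviation of $S_i-i/m$ is at most $\eps$. Choosing $\delta = c\eps^4$ for a small enough absolute constant $c$ then makes the failure probability at most $\eps$, completing the argument. The only subtlety, and the closest thing to a genuine obstacle, is balancing the grid spacing: it must be fine enough ($K \asymp 1/\eps$) that the deterministic interpolation error lies below $\eps/2$, yet this forces the union bound to absorb a factor of $K$, which is why $\delta$ must depend polynomially on $\eps$.
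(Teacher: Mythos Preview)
Your argument is correct. The variance computation for sampling without replacement is right, the grid-plus-monotonicity interpolation is sound, and your choice $\delta \asymp \eps^4$ makes both the spacing bound $(i_{k+1}-i_k)/m \le 1/K + 1/m \le \eps/4 + \delta \le \eps/2$ and the union-bounded Chebyshev estimate work simultaneously.

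The paper does not actually prove this proposition; it simply cites it as Theorem~20.7 of Aldous's Saint-Flour notes and points to quantitative versions in the literature. Aldous's treatment sits inside a general framework for weak convergence of exchangeable processes (the ``sampling bridge'' converging to $0$ uniformly), whereas your proof is a self-contained second-moment-and-chaining argument that avoids any exchangeability machinery. What your approach buys is elementarity and an explicit polynomial dependence $\delta = c\eps^4$; what Aldous's framework buys is a cleaner conceptual picture (the result is an instance of a functional law of large numbers for finite exchangeable sequences) and potential extensions to other functionals. Either is adequate for the paper's needs, since only the qualitative statement is used downstream.
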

See \cite[Lemma 7.5]{MR4084187} and \cite[Lemma 4.9]{MR3748328} for quantitative versions of this result. Proposition~\ref{prop:exc_lln}  is the last fact we need for the proof of our main result.
\begin{proof}[Proof of Theorem~\ref{thm:main}.]
As noted just after the statement of Theorem~\ref{thm:main}, the fact that $M_n/n \to 1$ in probability when $k(n)/n^{1/3} \to 0$ was proved in \cite{logan18variant}, so we need only handle the other assertion of the theorem. For the remainder of the proof we therefore assume that $k(n)/n^{1/3} \to \infty$. 

A result of {\L}uczak \cite[Theorem 3 ii.]{MR1099794} implies that if $p=p_n$ satisfies that $p_n=(1+o(1))/n$ and $n^{4/3}(p_n-1/n) \to \infty$, then the largest component of $G(n,p_n)$ has size $(2+o(1))np_n$ in probability. 
Since the components of $G(n,p_n)$ and of $F(n,p_n)$ are identical, 
recalling that $p_{n,\lambda}=1/n+\lambda/n^{4/3}$, it follows that if $\lambda=\lambda(n) \to \infty$ with $\lambda(n)=o(n^{1/3})$, then $|F^1(n,p_{n,\lambda(n)})|/(n^{2/3}\lambda(n)) \to 2$ in probability. By a subsubsequence argument, this implies that for all $\delta > 0$, 
\begin{equation}\label{eq:lc_size}
\lim_{\lambda \to \infty} \limsup_{n \to \infty} 
\p{|F^1(n,p_{n,\lambda})|/(n^{2/3}\lambda(n))<2-\delta} = 0.
\end{equation}

Fix $\eps \in (0,1)$. Then fix $\delta \in (0,\eps/2)$ small enough that Proposition~\ref{prop:exc_lln} holds for this $\eps$ and $\delta$, then let $\lambda$ be large enough that for all $n$ sufficiently large, 
\begin{equation}\label{eq:f1_bd}
\p{|F^1(n,p_{n,\lambda})|/n^{2/3} \le 1} < \eps\, 
\end{equation}
and
\begin{equation}\label{eq:deltanlambda_bd}
\p{\Delta^n_\lambda \ge \delta} < \eps\, .
\end{equation}
This is possible by~\eqref{eq:lc_size} and by  Proposition~\ref{prop:biggest_tree}. Since $\lambda$ is fixed, 
by Proposition~\ref{prop:critical_bound}, we  also have that 
\begin{equation}\label{eq:mnlambda_bd}
\p{M_{n,\lambda}(k(n))/n^{2/3} \ge \delta} < \eps\, 
\end{equation}
for $n$ sufficiently large. 

List the connected components of $F(n,[k(n)],p_{n,\lambda})$ contained in $F^1(n,p_{n,\lambda})$ as $C^1_{n,\lambda},\ldots,C^K_{n,\lambda}$; here $K$ is a random variable. These components are subtrees of $F^1(n,p_{n,\lambda})$, and their vertex sets partition $v(F^1(n,p_{n,\lambda}))$. 
Note that, writing $\mathcal{S}_{n,\lambda}=[k(n)] \cap v(F^1(n,p_{n,\lambda}))$, then each of $C^1_{n,\lambda},\ldots,C^K_{n,\lambda}$ contains exactly one vertex of $\mathcal{S}_{n,\lambda}$. 

We now consider the restricted process $(F(n,\cS_{n,\lambda},p),0 \le p \le 1)$. Due to the relation \eqref{eq:coupling_edge}, the only edges added in the Kruskal process $(F(n,p),0 \le p \le 1)$ which are not added in the restricted process 
$(F(n,\cS_{n,\lambda},p),0 \le p \le 1)$ are the edges of $F^1(n,p_{n,\lambda})$ which join distinct components $C^1_{n,\lambda},\ldots,C^K_{n,\lambda}$, and these edges are already present in $F(n,p_{n,\lambda})$. It follows that for each $1 \le i \le K$, the connected component of $F(n,\cS_{n,\lambda},1)$ containing $C^i_{n,\lambda}$ is precisely the union of the trees $\{T^n_{v,\lambda}, v \in v(C^i_{n,\lambda})\}$. On the other hand, since $\cS_{n,\lambda} \subset [k(n)]$, the components of $F(n,[k(n)],1)$ partition the components of $F(n,\cS_{n,\lambda},1)$, and so 
\begin{align*}
M_n 
& = \max(|C|:C\mbox{ is a component of }F(n,[k(n)],1))\\
& \le 
\max(|C|:C\mbox{ is a component of } F(n,S_{n,\lambda},1)) = 
\max_{1 \le i \le K} \sum_{v \in C^i_{n,\lambda}} |T^n_{v,\lambda}|\, .
\end{align*}

Write $m_n=|F^1(n,p_{n,\lambda})|$, then list the vertices of $F^1(n,p_{n,\lambda})$ 
as $v_1,\ldots,v_{m_n}$ so that for each $1 \le i \le K$, the vertices of $C^i_{n,\lambda}$ appear consecutively --- as
\[
v_{|C^1_{n,\lambda}|+\ldots+|C^{i-1}_{n,\lambda}|+1},\ldots,v_{|C^1_{n,\lambda}|+\ldots+|C^{i}_{n,\lambda}|}\, ,
\]
say. 
Necessarily $\max(|C^i_{n,\lambda}|,1 \le i \le K) \le M_{n,\lambda}(k(n))$, so 
on the event that $M_{n,\lambda}(k(n)) \le \delta n^{2/3}$ and $|F^1(n,p_{n,\lambda})| =m_n \ge n^{2/3}$, we then have 
\[
\frac{M_n}{n} \le \max\Big(\sum_{j=i}^{\ell} q^{v_j}_{n,\lambda}, 1 \le i < \ell \le m_n, \ell -i < \delta m_n\Big)\, .
\]
Therefore, on this event, if $M_n/n \ge 3\eps$ then we may find $i$ and $\ell$ as above so that 
\[
\sum_{j=i}^{\ell} \Big(q^{v_j}_{n,\lambda}-\frac{1}{m_n}\Big)
= \Big(\sum_{j=i}^{\ell} q^{v_j}_{n,\lambda}\Big)-\frac{\ell-i}{m_n}
\ge 3\eps-\delta >2\eps\, , 
\]
so either $\sum_{j=1}^i (q^{v_j}_{n,\lambda}-1/m_n) < -\eps$ or 
$\sum_{j=1}^\ell (q^{v_j}_{n,\lambda}-1/m_n) > \eps$. 
It follows that 
\begin{align}
\p{M_n \ge 3\eps n}
& \le \p{M_{n,\lambda}(k(n)) > \delta n^{2/3}} 
+ \p{|F^1(n,p_{n,\lambda})| < n^{2/3}} \notag\\
& + \p{\max_{1 \le i \le m_n}
 \Big|\sum_{j=1}^i \Big(q^{v_j}_{n,\lambda} - \frac{1}{m_n}\Big)\Big|>\eps} \notag\\
 & < 2\eps + 
 \p{\max_{1 \le i \le m_n}
 \Big|\sum_{j=1}^i \Big(q^{v_j}_{n,\lambda} - \frac{1}{m_n}\Big)\Big|>\eps}\, ,\label{eq:mn_bound}
\end{align}
where in the final line we have used~\eqref{eq:f1_bd} and~\eqref{eq:mnlambda_bd}. To bound the third probability we write 
\begin{align*}
&  \p{\max_{1 \le i \le m_n}
 \Big|\sum_{j=1}^i \Big(q^{v_j}_{n,\lambda} - \frac{1}{m_n}\Big)\Big|>\eps}\\
& =
 \E{ \p{\max_{1 \le i \le m_n}
 \Big|\sum_{j=1}^i \Big(q^{v_j}_{n,\lambda} - \frac{1}{m_n}\Big)\Big|>\eps~\Big\vert~\cF_{n,\lambda}}} \\
 & = 
 \E{ \p{\max_{1 \le i \le m_n}
 \Big|\sum_{j=1}^i \Big(q^{v_{\pi(j)}}_{n,\lambda} - \frac{1}{m_n}\Big)\Big|>\eps~\Big\vert~\cF_{n,\lambda}}}\, ,
\end{align*}
where conditionally given $\cF_{n,\lambda}$, $\pi$ is a uniformly random permutation of $m_n$ independent of the values $(q^{v_i}_{n,\lambda},1\le i \le m_n)$. The second equality holds as conditionally given $\cF_{n,\lambda}$ the random variables $(q^{v_i}_{n,\lambda},1\le i \le m_n)$ are exchangeable, due to Proposition~\ref{prop:exch_attachments}. 

Recall that $\Delta^n_\lambda := \max(q^v_{n,\lambda}, v \in F^1(n,p_{n,\lambda}))$; then by Proposition~\ref{prop:exc_lln} we have 
\begin{align*}
& \p{\max_{1 \le i \le m_n}
 \Big|\sum_{j=1}^i \Big(q^{v_{\pi(j)}}_{n,\lambda} - \frac{1}{m_n}\Big)\Big|>\eps~\vert~\cF_{n,\lambda}}\\
& \le 
 \p{\Delta^n_\lambda \ge \delta~\vert~\cF_{n,\lambda}} +
 \p{\max_{1 \le i \le m_n}
 \Big|\sum_{j=1}^i \Big(q^{v_{\pi(j)}}_{n,\lambda} - \frac{1}{m_n}\Big)\Big|>\eps~\vert~\cF_{n,\lambda},\Delta^n_\lambda \le \delta}\\
 & \le 
\p{\Delta^n_\lambda \ge \delta~\vert~\cF_{n,\lambda}}+\eps\, ,
\end{align*}
so it follows that 
\begin{align*}
\p{\max_{1 \le i \le m_n}
 \Big|\sum_{j=1}^i \Big(q^{v_{j}}_{n,\lambda} - \frac{1}{m_n}\Big)\Big|>\eps}
& \le \eps +\E{\p{\Delta^n_\lambda \ge \delta~\vert~\cF_{n,\lambda}}}\\
& = \eps + \p{\Delta^n_\lambda \ge \delta} < 2\eps\, ,
\end{align*}
the last inequality holding by~\eqref{eq:deltanlambda_bd}. Combining this bound with \eqref{eq:mn_bound}, it follows that $\p{M_n \ge 3\eps n} < 4\eps$; since $\eps > 0$ was arbitrary, this implies that $M_n/n \to 0$ in probability, as required.
\end{proof}

\section{\bf Proof of Proposition~\ref{prop:critical_bound}.}\label{sec:critical_bd_proof}

Our proof of Proposition~\ref{prop:critical_bound} has three steps. In the first step, we show that it suffices to prove that all components of $F(n,[k(n)],p_{n,\lambda})$ contained in the largest $O(1)$ components of $F(n,p_{n,\lambda})$ have size $o(n^{2/3})$ in probability. This essentially boils down to the application of well-known facts about the structure of the critical random graph.  In the second step we analyze the couplings presented above, between Kruskal's algorithm with different starting sets $\cS$ and between Kruskal's algorithm and the Erd\H{o}s-R\'enyi process. This analysis provides us with a tool for understanding how, distributionally, a given component $C$ of $F(n,p_{n,\lambda})$ is partitioned into pieces in $F(n,[k(n)],p_{n,\lambda})$, depending on the number of elements of $C \cap [k(n)]$. In the third step, which occupies most of the rest of the paper, we use the result of the analysis of the couplings to show that the largest connected components of $F(n,p_{n,\lambda})$ are indeed partitioned into pieces of size $o(n^{2/3})$ in $F(n,[k(n)],p_{n,\lambda})$, with high probability. 

\subsection{Step 1: reducing to the study of large components.}

For $p \in [0,1]$, list the components of $F(n,p)$ in decreasing order of size as $(F^i(n,p),i \ge 1)$, with ties broken uniformly at random. (The point of breaking ties this way is so that $v(F^i(n,p))$ is a uniformly random subset of $[n]$ conditional on its size.) Then for $\lambda \in \R$ and $\cS\subset [n]$, write $M^i_{n,\lambda}(\cS)$ for the size of the largest connected component of $F(n,\cS,p_{n,\lambda})$ contained in $F^i(n,p_{n,\lambda})$. If $\cS=[k(n)]$ we write $M^i_{n,\lambda}(k(n))$ instead of $M^i_{n,\lambda}([k(n)])$

In this section, we show how Proposition~\ref{prop:critical_bound} is a consequence of the following result.

\begin{prop}\label{prop:large_comps}
Fix $\lambda \in \R$ and $i \in \N$. If $k(n)/n^{1/3} \to \infty$ then $M^i_{n,\lambda}(k(n))/n^{2/3} \to 0$ in probability. 
\end{prop}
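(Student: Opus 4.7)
The plan is to reduce the problem to understanding how a single critical component is fragmented by the multi-source invasion, and then to show that this fragmentation leaves no macroscopic pieces with high probability.

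First I would use the edge-swap identity~\eqref{eq:coupling_edge} iteratively, together with the Kruskal--Erd\H{o}s--R\'enyi coupling from Section~\ref{sec:sketch}, to describe the partition of the component $F^i(n,p_{n,\lambda})$ induced by $F(n,[k(n)],p_{n,\lambda})$. The outcome should be a clean combinatorial description: the partition is obtained from the spanning tree $F^i(n,p_{n,\lambda})$ by deleting exactly $|\cS_i|-1$ of its edges, where $\cS_i := [k(n)] \cap v(F^i(n,p_{n,\lambda}))$, and each deleted edge is the heaviest along the path in $F^i$ between some pair of sources. (This is essentially the $|\cS_i|-1$ edges skipped by the restricted Erd\H{o}s--R\'enyi process when it is run on the component $F^i$, and it follows from tracing through the addition rule in~\eqref{eq:coupling_edge} one source at a time.)

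Second, by exchangeability of the vertex labels, conditionally on $F^i(n,p_{n,\lambda})$ the set $\cS_i$ is a uniform random subset of $v(F^i(n,p_{n,\lambda}))$ of expected size $k(n)|F^i(n,p_{n,\lambda})|/n$. Since $|F^i(n,p_{n,\lambda})|$ is of order $n^{2/3}$ in probability by the classical scaling theory of the critical Erd\H{o}s--R\'enyi graph (Aldous, {\L}uczak, and others), and $k(n)/n^{1/3} \to \infty$, this gives $|\cS_i| \to \infty$ in probability. So $F^i$ is being fragmented into a number of pieces that tends to infinity, and the typical piece has size of order $n/k(n) = o(n^{2/3})$; the issue is only the maximum.

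The main remaining task is to show that if $j \to \infty$ sources are chosen uniformly in $F^i(n,p_{n,\lambda})$ and the $j-1$ corresponding heaviest-along-a-source-pair-path edges are deleted, then no surviving subtree has size comparable to $|F^i|$. I would approach this by passing to the scaling limit of $F^i(n,p_{n,\lambda})$ --- a random real tree in the universality class of the continuum random tree --- and recoding the question as a fragmentation of a large random binary tree with i.i.d.\ edge weights. The main obstacle, and the task that (as the abstract and Section~\ref{sec:critical_bd_proof} preview) occupies the bulk of the paper, is to establish that this fragmentation on random binary trees leaves no macroscopic component with high probability as the number of cuts grows. I expect controlling the tail over \emph{all} pieces simultaneously --- rather than a typical or a fixed one --- to be the most delicate ingredient, since the fragmentation is heavily biased towards cutting at heavy edges that may sit near high-degree branching points of the limiting tree.
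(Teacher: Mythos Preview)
Your plan is broadly correct and lines up with the paper at the level of the first two paragraphs: the partition of $F^i(n,p_{n,\lambda})$ really is obtained by removing $|\cS_i|-1$ edges, each the maximum-weight edge on a path in $F^i$ between two sources, and $|\cS_i|\to\infty$ in probability for the reasons you give. But there is a genuine gap in the third paragraph, and the paper's route to the ``main task'' differs from the one you propose in a way that makes the argument considerably easier.

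\textbf{The gap: surplus.} You work throughout with the spanning tree $F^i(n,p_{n,\lambda})$ and then recode the problem as fragmentation of ``a large random binary tree with i.i.d.\ edge weights''. This is only valid when the $i$'th component $G^i(n,p_{n,\lambda})$ of $G(n,p_{n,\lambda})$ is itself a tree. With positive limiting probability its surplus $S(n)$ is strictly positive, and in that case $F^i$ is the MST of $G^i$: it is \emph{not} a uniformly random labelled tree, and the induced ordering of its edges by weight is \emph{not} exchangeable (edges near cycles of $G^i$ are biased to be lighter). So the reduction to ``uniform tree, exchangeable ordering'' fails exactly when $S(n)>0$. The paper avoids this by conditioning on $(Q(n),R(n),S(n))$ and working not with $F^i$ but with the full component $G^i$, which \emph{is} a uniformly random connected graph with surplus $S(n)$ carrying exchangeable edge weights; the correct process is then path-\emph{and-cycle}-breaking on $G^i$ (Proposition~\ref{prop:pacb}). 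The case $s>0$ is reduced to the tree case $s=0$ via the kernel decomposition of Section~\ref{sec:s>0}, not by absorbing the $O(1)$ surplus edges into a scaling limit.

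\textbf{A simpler approach to the maximum.} You anticipate that the ``most delicate ingredient'' is controlling the tail of the maximum over all fragments simultaneously. The paper sidesteps this entirely: for two independent uniform vertices $X,Y$ of the tree one has
\[
\big(\E{\max(|C|:C\mbox{ a component of }F_q)}\big)^2 \le q^2\,\p{X\lr{F_q}Y},
\]
so it suffices to show the connection probability for a single pair of random vertices is $o(1)$. This is done by restricting to the subtree $\hull{T_q}{\{X,Y\}\cup[r]}$ spanned by $X,Y$ and the sources (Lemma~\ref{lem:path-breaking restriction}), reading its asymptotic shape from Aldous's line-breaking construction (Proposition~\ref{prop:branch lengths}), and using a size-biased ordering argument on the branches (Corollary~\ref{cor:uv connected deterministic} and Lemma~\ref{lem:|U1|}). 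No uniform-over-all-pieces estimate is needed.
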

\begin{proof}[Proof of Proposition~\ref{prop:critical_bound}]
Fix $\eps > 0$. 

By \cite[Corollary 2]{aldous97brownian}, there is $j=j(\eps) \in \N$ such that for all $n \in \N$, 
\[
\p{\max(|F^\ell(n,p_{n,\lambda})|,\ell > j) > \eps n^{2/3}} < \eps\, .
\]
Since $M^\ell(n,p_{n,\lambda})(k(n)) \le |F^\ell(n,p_{n,\lambda})|$, 
it follows that for this value of $j$, 
\begin{align*}
\p{M_{n,\lambda}(k(n)) \ge \eps n^{2/3}}
& \le \p{\max_{1 \le \ell \le j} M^\ell_{n,\lambda}(k(n)) >\eps n^{2/3}} \\
		& + \p{\max(|F^\ell(n,p_{n,\lambda})|,\ell > j) > \eps n^{2/3}} \\
		& \le \eps + 
		\sum_{1 \le \ell \le j}
		\p{M^\ell_{n,\lambda}(k(n)) > \eps n^{2/3}} \\
		& \le 2\eps\, 
\end{align*}
for $n$ sufficiently large, the last bound holding due to Proposition~\ref{prop:large_comps}. Since $\eps > 0$ was arbitrary, this proves Proposition~\ref{prop:critical_bound}. 
\end{proof}

\subsection{Step 2: composing the couplings}
It is useful to briefly return to the setting of a deterministic connected graph $G=(v(G),e(G),\rw)$. Fix a starting set $\cS \subset v(G)$, and list edges of $G$ in increasing order of weight as $e(1),\ldots,e(m)$. Using the couplings of $F_i^{G,\cS}$ and $F_i^{G,\emptyset}$, on the one hand, and of $F_i$ and $G_i$, on the other hand, allows us to construct $F_m^{G,\cS}$ via a {\em path-and-cycle-breaking} process starting from $G$. 
Recall the definition of the augmented graph $G_\cS'$ from Section~\ref{sec:prim_kruskal}, which is formed from $G$ by adding a vertex $\rho$ which is joined to the vertices of $\cS$ by edges of very low weight. Then an edge $e(i)$ is added to $G_i$ but not to $F_i^{G,\emptyset}$ if and only if it lies on a cycle of $G_i$, which occurs if and only if it is the largest-weight edge on a cycle in $G$. the edge $e(i)$ is added to $F_i^{G,\emptyset}$ but not $F_i^{G,\cS}$ if and only if it is the largest-weight edge on a cycle in $G_{\cS}'$, which occurs if and only if there are distinct vertices $u,v \in \cS$ such that $e(i)$ lies on a path from $u$ to $v$ in $G_i$ (in which case $e(i)$ is the largest-weight edge on such a path). It follows that we may recover $F_m^{G,\cS}$ from $G$ as follows. 
\begin{itemize}
\item Let $H_0=G$.
\item For $0 \le i < m$, if either 
\begin{itemize}
\itemsep-0.1em
\item[(a)] $e(m-i)$ lies on a cycle of $H_i$, or 
\item[(b)] there exist distinct vertices $u,v \in \cS$ such that $e(m-i)$ lies on a path from $u$ to $v$ in $H_{i}$, 
\end{itemize}
then set $H_{i+1}=H_i-e(m-i)$; otherwise set $H_{i+1}=H_i$. 
\end{itemize}
The final graph $H_m$ is precisely $F_m^{G,\cS}$. 
This path-and-cycle-breaking construction of $F_m^{G,\cS}$ has the following immediate consequence in the setting of exchangeable edge weights. 
\begin{fact}[Path-and-cycle-breaking]
Let $G=(v(G),e(G),\rw)$ be a connected graph with exchangeable, almost surely distinct edge weights. Fix $\cS \subset v(G)$ and an ordering $\be=(e_1,\ldots,e_m)$ of $e(G)$. Generate a subgraph $F$ of $G$ as follows. 
\begin{enumerate}
\item 
Let $H_0=G$. 
\item For $0 \le i < m$, if $e_{i}$ lies on a cycle in $H_i$ or $e_i$ lies on a path in $H_i$ between distinct vertices of $\cS$, then set $H_{i+1}=H_i-e_i$; otherwise set $H_{i+1}=H_i$. 
\item Set $F=H_m$.
\end{enumerate}
If the ordering $\be$ is exchangeable then $F$ is distributed as $F_m^{G,\cS}$. 
\end{fact}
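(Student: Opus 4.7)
The plan is to bootstrap the desired distributional statement from the deterministic construction sketched in the paragraph immediately preceding the Fact. That paragraph essentially asserts that if one runs the path-and-cycle-breaking process using the ordering $\be^\star = (e(m), e(m-1), \ldots, e(1))$, which lists $e(G)$ in \emph{decreasing} order of weight, then the output $H_m$ equals $F_m^{G,\cS}$. I would first verify this deterministic claim rigorously via induction on $i$, maintaining the invariant
\[
F_m^{G,\cS} \subseteq H_i \subseteq F_m^{G,\cS} \cup \{e(1), \ldots, e(m-i)\}.
\]
The inductive step splits on whether $e(m-i) \in F_m^{G,\cS}$. If not, the cycle property of MSTs applied in the augmented graph $G'_\cS$ (from Section~\ref{sec:prim_kruskal}) yields a cycle (in $G$) or an $\cS$-path (in $G$) on which $e(m-i)$ is heaviest; all other edges of this structure are lighter than $e(m-i)$, hence in $H_i$ by the invariant, so the algorithm correctly removes $e(m-i)$. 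If $e(m-i) \in F_m^{G,\cS}$, a short argument combining the invariant with the cycle property in $G'_\cS$ shows that no cycle or $\cS$-path through $e(m-i)$ exists in $H_i$, so $e(m-i)$ is correctly kept.

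Once the deterministic claim is in place, the probabilistic conclusion follows quickly by symmetry. Observe that the path-and-cycle-breaking procedure is a deterministic function $\Phi$ of the unweighted graph $(v(G), e(G))$, the set $\cS$, and the edge ordering being processed; it does not depend on the weights $\rw$ except through this ordering. The deterministic claim thus reads $\Phi(\be^\star) = F_m^{G,\cS}$ almost surely. Because $\rw$ is exchangeable with almost surely distinct values, the induced ordering $\be^\star$ is a uniform random permutation of $e(G)$. Any exchangeable ordering $\be$ is likewise uniform, so $\be \eqdist \be^\star$, and applying the measurable map $\Phi$ to both sides yields $F = \Phi(\be) \eqdist \Phi(\be^\star) = F_m^{G,\cS}$, as required.

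I expect the only real technical content to lie in pinning down the induction cleanly: the informal justification in the preceding paragraph does not explicitly rule out mistakenly removing an edge of $F_m^{G,\cS}$, and one needs to combine the invariant with the cycle property in $G'_\cS$ (together with the small weights on the $\rho x$ edges) to handle this. The passage from the deterministic statement to the exchangeable-ordering statement is essentially immediate, since the algorithm is a measurable function of the ordering alone.
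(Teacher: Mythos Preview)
Your proposal is correct and follows the same route as the paper: establish the deterministic identity $\Phi(\be^\star)=F_m^{G,\cS}$ for the decreasing-weight ordering, then transfer to an arbitrary exchangeable ordering via the observation that both $\be^\star$ and $\be$ are uniform and $\Phi$ depends only on the ordering. The paper treats the Fact as an ``immediate consequence'' of the preceding paragraph and does not spell out an induction; your invariant $F_m^{G,\cS}\subseteq H_i\subseteq F_m^{G,\cS}\cup\{e(1),\dots,e(m-i)\}$ together with the cycle property in $G'_\cS$ is exactly the right way to make rigorous the point the paper glosses over, namely that no edge of $F_m^{G,\cS}$ is ever removed by the process.
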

We call the above process {\em path-and-cycle-breaking on $G$ with starting set $\cS$ and edge ordering $\be$}, and refer to $F$ as the outcome of the process. (The edge weights $\rw$ are not used in the process, but they are used in defining $F_m^{G,\cS}$.) Most of our analysis will end up focussing on the case that $G$ is in fact a tree; in this case path-and-cycle-breaking process clearly never breaks cycles, and we simply refer to it as a path-breaking process.

We shall use the path-and-cycle-breaking process to understand how the components of $F(n,[k(n)],p_{n,\lambda})$ partition those of $G(n,p_{n,\lambda})$. Suppose that $C$ is a connected component of $G(n,p_{n,\lambda})$. 
Let $N=|v(C)|$ and let $S=|e(C)|-|v(C)|+1$ be the {\em surplus} of $C$. Let $C'$ be obtained from $C$ by relabeling the vertices of $C$ in increasing order as $1,\ldots,N$. Then $C'$ is uniformly distributed over connected graphs with vertex set $[N]$ and surplus $S$, and its edge weights are exchangeable. In view of these facts, the value of the next proposition should be rather clear.
\begin{prop}\label{prop:pacb}
For all $\eps > 0$ and any non-negative integer $s$, there exists integer $r > 0$ such that the following holds. For $q \ge 1$, let $G_q$ be uniformly distributed over the set of connected graphs with vertex set $[q]$ and surplus $s$. For $q \ge r$ let  $F_q=F_q(r,\be)$ be the outcome of the path-and-cycle-breaking process on $G_q$ with starting set $[r]$ and an exchangeable random ordering $\be=(e_1,\ldots,e_{m})$ of $e(G_q)$. Then for all $q$ sufficiently large, 
\[
\E{\max(|C|:C\mbox{ is a component of }F_q)} \le \eps q\, .
\]
\end{prop}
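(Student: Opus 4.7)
The plan is to prove Proposition~\ref{prop:pacb} in three steps.

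First, applying the Path-and-cycle-breaking fact, I would split the process into two phases. The cycle-breaking removes exactly $s$ edges of $G_q$, which (since $s$ is fixed and a uniformly random connected graph on $[q]$ with surplus $s$ decomposes as a bounded-size kernel with uniformly random trees hanging off) lie within a bounded region, yielding a spanning tree $T_q$ whose asymptotic structure matches that of a uniformly random labeled tree on $[q]$. The subsequent path-breaking then cuts $T_q$ into the forest $F_q$ with $r$ components. I thus reduce to the case $s=0$, where $G_q=T_q$ is a uniformly random tree on $[q]$ and only path-breaking with marked set $[r]$ remains.

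Second, I would exploit the following characterization, easily verified by running path-breaking in order of decreasing edge weight: an edge $e$ of $T_q$ is cut if and only if $e$ is the heaviest edge on some tree-path between two marked vertices. All cut edges lie in the Steiner subtree $T_M$ of the marks, and the $r-1$ cuts partition $T_M$ into $r$ subtrees $B_1,\ldots,B_r$, one per marked vertex. Writing $H_v$ for the subtree of $T_q\setminus T_M$ hanging from $v \in T_M$ and $w_v=1+|H_v|$, one has $\sum_{v\in T_M} w_v=q$ and the size of the $i$-th component of $F_q$ is $|C_i|=\sum_{v\in B_i}w_v$. It therefore suffices to show that, with high probability for $r$ sufficiently large, no part of this weighted partition of $T_M$ has mass exceeding $\eps q$.

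Third, I would combine this reformulation with three structural estimates for a uniform random tree on $[q]$ with $r$ uniform marked vertices: (i) the Steiner subtree satisfies $|T_M| = O(\sqrt{qr})$ in probability; (ii) the $\Theta(r)$ bridges of $T_M$ (maximal sub-paths between marks and branch points) have balanced lengths on the scale $\sqrt{q/r}$, so that the uniformly random heaviest-edge cut in each bridge approximately halves its total weight; and (iii) uniformly over $v\in T_M$, the hanging weight $w_v$ is $O(q/r)$ up to polylogarithmic factors. Combining these via a union bound yields $|C_i|=o(q)$ with high probability once $r$ is large, after which the expectation bound follows because $|C_i|\le q$ deterministically.

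The main obstacle will be estimate (iii). In an unconditioned uniform random tree on $[q]$, the largest subtree attached to a typical vertex has size $\Theta(q)$ in expectation, so even a single $v\in T_M$ carrying a huge $H_v$ could force a macroscopic component. The saving grace is that a large subtree hanging off $v$ is, for $r$ large, very likely to contain some marked vertex --- and is then absorbed into $T_M$ rather than counted in $w_v$. Formalizing this absorption phenomenon, and controlling the weighted partition uniformly over all of $T_M$, is the technical heart of the argument; the recursive binary-tree structure of the contraction of $T_M$ (whose internal vertices have degree $3$ with high probability) provides the natural framework, corresponding to the ``fragmentation process on large random binary trees'' referenced in the abstract.
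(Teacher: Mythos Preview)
Your reduction from $s>0$ to $s=0$ has a real gap. After cycle-breaking, the spanning tree you obtain is the random-weight MST of $G_q$, and this is \emph{not} a uniformly random labeled tree on $[q]$; for instance, when $s=1$ the probability that a given tree $T$ arises is proportional to $\sum_{e\notin T} 1/(1+d_T(e))$, which is not constant in $T$. Your assertion that its ``asymptotic structure matches'' the uniform tree is plausible at the level of scaling limits, but the estimates (i)--(iii) you need are finer than that and would require separate justification. The paper avoids this entirely by a kernel decomposition (Proposition~\ref{prop:dirichlet_limit}): $G_q$ is partitioned into trees $T_q(a)$ indexed by the vertices and edges of the kernel, each of which \emph{is} conditionally a uniform random tree on its vertex set, and the $s=0$ case is applied inside each $T_q(e)$ after observing that any $X$--$Y$ path in $F_q$ must pass through one of two designated vertices of $T_q(\kappa(X))$.

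For the $s=0$ case itself, your item (ii) conflates two different counts: the contracted Steiner tree $T_M$ has $2r-3$ bridges but the path-breaking process makes only $r-1$ cuts, so it is not true that ``each bridge'' is cut at its heaviest edge---which bridges are cut is itself random and coupled to the fragmentation. And your item (iii), as you note, is the crux; you have identified the absorption heuristic but not a mechanism to control $\max_v w_v$ uniformly over the $\Theta(\sqrt{qr})$ vertices of $T_M$, nor the joint behaviour of the (random) pieces $B_i$ and the (random) weights $w_v$. The paper sidesteps both difficulties by a genuinely different device: rather than bounding $\max_i|C_i|$ directly, it bounds $\p{X\lr{F_q}Y}$ for two extra uniform random vertices $X,Y$, using $\E{\max_i|C_i|}^2\le q^2\,\p{X\lr{F_q}Y}$. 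By Lemma~\ref{lem:path-breaking restriction} this probability depends only on the Steiner tree of $\{X,Y\}\cup[r]$, and the key combinatorial input (Lemma~\ref{lem:connection condition}) is that if $X$ and $Y$ survive connected then almost all of the short branches attaching to the outer thirds of the $X$--$Y$ path must be targeted before the middle third is first targeted. This reduces to a size-biased ordering estimate (Corollary~\ref{cor:uv connected deterministic}) plus lower bounds on the number of such short branches via the line-breaking construction---and never requires uniform control of hanging masses or of the global partition $(B_i)$.
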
 
This proposition has the following consequence. Fix non-negative integers $s$ and $(r(q),q \ge 1)$ with $r(q) \le q$ and with $r(q) \to \infty$ as $q \to \infty$. Let $G_q$ be as in Proposition~\ref{prop:pacb}, and let $F_q$ be the outcome of the path-and-cycle-breaking process on $G_q$ with starting set $[r(q)]$ and an exchangeable random ordering $\be=(e_1,\ldots,e_m)$ of $e(G_q)$. Then Proposition~\ref{prop:pacb} and Markov's inequality together imply that for any $\eps > 0$, 
\begin{equation}\label{eq:pacb_reformulation}
q^{-1}\E{\max(|C|:C\mbox{ is a component of }F_q)} \to 0\, 
\end{equation}
as $q \to \infty$. 

We prove Proposition~\ref{prop:pacb} in Section~\ref{sec:partitioning}, below; before doing so, we use it (or in fact its consequence, \eqref{eq:pacb_reformulation}) to prove Proposition~\ref{prop:large_comps}.
\begin{proof}[Proof of Proposition~\ref{prop:large_comps}]
Fix $\lambda \in \R$ and $i \in \N$. 
Write $G^i(n,p_{n,\lambda})$ for the component of $G^i(n,p_{n,\lambda})$ spanned by $F^i(n,p_{n,\lambda})$. 

Let $Q=Q(n)=|v(G^i(n,p_{n,\lambda}))|=|v(F^i(n,p_{n,\lambda}))|$, let $R=R(n)=|v(G^i(n,p_{n,\lambda}) \cap [k(n)]|$, and let $S=S(n)=|e(G^i(n,p_{n,\lambda}))|-|v(G^i(n,p_{n,\lambda})|+1$ be the surplus of $G^i(n,p_{n,\lambda})$. 

We will use in the course of the proof that $S(n)$ converges in distribution to an almost surely finite limit, and that 
$n^{-2/3}|F^i(n,p_{n,\lambda})|=n^{-2/3}|G^i(n,p_{n,\lambda})|$ 
converges in distribution to an almost surely finite, strictly positive limit; these facts appear in \cite[Folk Theorem 1 and Corollary 2]{aldous97brownian}. 

Conditionally given $Q(n)$, the vertex set $v(G^i(n,p_{n,\lambda}))$ is a uniformly random size-$Q(n)$ subset of $[n]$. The last convergence in distribution referenced in the previous paragraph (and in particular the fact that the limit is almost surely strictly positive) implies that for any $\eps > 0$ there exists $\delta > 0$ such that $\p{Q(n) \ge \delta n^{2/3}} > 1-\eps$. 
Since $k(n)/n^{1/3} \to \infty$, this implies that 
$k(n)Q(n)/n \to \infty$ in probability. 
Since $v(F^i(n,p_{n,\lambda}))$ is a uniformly random subset of $[n]$ conditional on its size, it then follows by standard concentration results for sampling without replacement that $R(n) \to \infty$ in probability. Moreover, for any fixed $s \in \N$, by \cite[Corollary 2]{aldous97brownian} we have $\liminf_{n \to \infty} \p{S(n)=s} >0$, Since $Q(n)$ and $R(n)$ both tend to infinity in probability, it follows that $Q(n)$ and $R(n)$ still tend to infinity in probability on the event that $S(n)=s$, in the sense that for any $x >0$, 
\[
\Cprob{Q(n)>x,R(n)>x}{S(n)=s} \to 1
\]
as $n \to \infty$.

Next, recall that 
\begin{align*}
& M^i_{n,\lambda}(k(n))\\
& :=\max(|C|:\mbox{$C$~a conn.\ comp.\ of $F(n,[k(n)],p_{n,\lambda})$ contained in $F^i(n,p_{n,\lambda})$})
\end{align*}
and write $M^i_{n,\lambda}=M^i_{n,\lambda}(k(n))$ for succinctness. 
Conditionally given $Q(n)$, $R(n)$ and $S(n)$, the random variable $M^i_{n,\lambda}$
has the same distribution as $\max(|C|:C\mbox{ is a component of }F_{Q(n)})$, where $F_{Q(n)}$ is the outcome of the path-and-cycle breaking process on $G_{Q(n)}$ with starting set $R(n)$. 
By the exchangeability of the vertex labels, this distribution is unchanged if rather than $R(n)$ we  use the starting set $[|R(n)|]=\{1,\ldots,|R(n)|\}$. It then follows from \eqref{eq:pacb_reformulation} and Markov's inequality that for any $\eps > 0$, 
\begin{align*}
\probC{M_{n,\lambda}^i > \eps |F^i(n,p_{n,\lambda})|}{S(n)=s} 
& =
\probC{Q(n)^{-1}M_{n,\lambda}^i > \eps}{S(n)=s}\\
& \to 0\, ,
\end{align*}
as $n \to \infty$. Moreover, since $S(n)$ converges in distribution to an almost surely finite limit, it follows that for all $\eps > 0$ there is $s_0$ such that for $n$ sufficiently large, $\p{S(n) > s_0} < \eps$. Combined with the preceding bound, this yields that for any $\eps > 0$, 
\begin{align*}
& \limsup_{n \to \infty} \p{M_{n,\lambda}^i > \eps |F^i(n,p_{n,\lambda})|}\\
& \le 
\limsup_{n \to \infty} \max_{1 \le s \le s_0} \probC{Q(n)^{-1}M_{n,\lambda}^i > \eps}{S(n)=s}
+ 
\limsup_{n \to \infty} \p{S(n) > s_0}\\
& \le \eps\, .
\end{align*}
It follows that $M_{n,\lambda}^i /|F^i(n,p_{n,\lambda})| \to 0$ in probability. Since $|F^i(n,p_{n,\lambda})|/n^{-2/3}$ converges in distribution to an almost surely finite limit, this implies that $M_{n,\lambda}^i/n^{2/3} \to 0$ in probability, as required. 
\end{proof}

\subsection{Step 3: partitioning a component}\label{sec:partitioning}
The goal of this section is to prove Proposition~\ref{prop:pacb}. We first prove the proposition for the special case $s=0$, in which case $G_q$ is a uniformly random tree with vertex set $q$, and the path-and-cycle-breaking process is simply a path-breaking process. We may restate the case $s=0$ of Proposition~\ref{prop:pacb} as follows. 
\begin{prop}\label{prop:path-breaking fixed r}
For all $\eps > 0$, there exists $r > 0$ such that the following holds. 
For $q \ge 1$, let $T_q$ be uniformly distributed over the set of trees with vertex set $[q]$. Let $F_q=F_q(r,\be)$ be the outcome of the path-breaking process on $T_q$ with starting set $[r]$ and an exchangeable random ordering $\be=(e_1,\ldots,e_{q-1})$ of $e(T_q)$. Then for all $q$ sufficiently large, 
\[
\E{\max(|C|:C\mbox{ is a component of }F_q)} \le \eps q\, .
\]
\end{prop}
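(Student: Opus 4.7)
The plan is to pass to the scaling limit. As $q \to \infty$ with $r$ fixed, the rescaled tree $q^{-1/2} T_q$ converges to Aldous's Continuum Random Tree (CRT), and by label symmetry the starting set $[r]$ becomes $r$ uniform random points $X_1, \ldots, X_r$ in the CRT. Using the sequential coupling from equation~\eqref{eq:coupling_edge}, the path-breaking process admits the following clean recursive description: starting with $T_q$ and marked set $\{1\}$, for each $i \in \{2, \ldots, r\}$ we cut the current component of the new marked vertex $i$ at a uniformly random edge on the path from $i$ to the already-marked vertex in its component. This converges to the analogous fragmentation of the CRT, and by standard tightness and continuity of the max-component functional, $M_q/q$ converges in distribution to $M_\infty(r)$, the mass of the largest fragment of the CRT after $r-1$ such cuts. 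Since $M_q/q \le 1$, the bounded convergence theorem reduces the proposition to showing $\E{M_\infty(r)} \to 0$ as $r \to \infty$.

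To analyze $M_\infty(r)$, I would exploit the structure of the Steiner tree $\mathcal{R}_r$ of the $r$ marked points. Almost surely $\mathcal{R}_r$ is a binary tree with $r$ leaves, $r - 2$ Steiner points of degree $3$, and $2r - 3$ branches, whose total length is of order $\sqrt{r}$ via Aldous's line-breaking construction. The CRT mass not on $\mathcal{R}_r$ is distributed along its branches as bubbles in a locally uniform way: the bubble mass attached to a portion of length $\delta$ of $\mathcal{R}_r$ is of order $\delta/\sqrt{r}$. Under this picture, the path-breaking fragmentation cuts exactly $r - 1$ of the $2r - 3$ branches of $\mathcal{R}_r$, with cut positions uniform along each cut branch, producing $r$ sub-trees each containing one marked point.

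The main obstacle is controlling the max fragment mass. Heuristically, the $r$ sub-trees have average length of order $1/\sqrt{r}$ and hence average bubble mass of order $1/r$, so the average fragment mass vanishes. The hard part will be upgrading this to a uniform bound, showing that with high probability no sub-tree has length or bubble mass substantially exceeding the average. I would attempt this via induction on $r$: each new marked point $X_i$ lands in the current max fragment with probability equal to its mass, so large fragments are split preferentially, and one must show the induced binary split is typically non-degenerate on average, so the max mass shrinks steadily as $r$ grows. Alternatively, one could combine concentration estimates for the shape of the uniform random binary tree with $r$ leaves (controlling e.g.\ the longest sub-path of $\mathcal{R}_r \setminus R$) with quantitative estimates on the distribution of CRT mass along the Steiner sub-trees, using the explicit line-breaking representation. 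Either route requires delicate combinatorial and probabilistic control, which is where I expect the bulk of the technical work to lie.
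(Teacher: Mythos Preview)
Your recursive description of the path-breaking process is incorrect. The coupling~\eqref{eq:coupling_edge} does show that $F_q$ is obtained from $T_q$ by, for $i=2,\ldots,r$ in turn, removing the \emph{largest-weight} edge on the path from $i$ to the unique already-marked vertex in its current component. But conditionally on $T_q$ and on the earlier cuts, this edge is \emph{not} uniform on its path: the earlier cuts reveal information about the weights (edges that were candidates for an earlier cut but survived are known to have smaller weight than the edge that was cut), and this biases the location of the next maximum. For a concrete counterexample, take the tree on $\{1,2,3,a,b\}$ with edge set $\{1a,ab,b2,3b\}$ and exchangeable weights. The first cut (adding vertex $2$) is uniform on $\{1a,ab,b2\}$, but conditionally on it being $1a$, the second cut (adding $3$, on the path $\{3b,b2\}$) lands on $3b$ with probability $5/8$, not $1/2$, since $w(b2)$ has been conditioned to be small while $w(3b)$ is unconditioned. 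One can check directly that the resulting forest law differs from that of your independent-uniform-cut process. So the fragmentation you propose to analyze in the CRT is simply not the scaling limit of path-breaking.

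Even granting a correct description of the limiting fragmentation, the remainder of the proposal is a program rather than a proof: the convergence $M_q/q \convdist M_\infty(r)$ is asserted via ``standard tightness and continuity'' without justification, and both suggested routes to $\E{M_\infty(r)}\to 0$ are left at the level of heuristics.

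The paper avoids describing the full fragmentation altogether. It bounds $q^{-2}(\E{\max|C|})^2$ by $\p{X\lr{F_q}Y}$ for independent uniform $X,Y\in[q]$, and reduces this (Lemma~\ref{lem:path-breaking restriction}) to path-breaking on the finite subtree $\hull{T_q}{\{U,W\}\cup[r]}$ spanned by two additional uniform vertices $U,W$ together with $[r]$. The combinatorial core (Lemma~\ref{lem:connection condition} and Corollary~\ref{cor:uv connected deterministic}) is that if $U$ and $W$ remain connected after path-breaking, then in the exchangeable edge ordering the short branches attaching near each end of the $U$--$W$ path must almost all be targeted before the middle third of that path is first targeted; a size-biased ordering argument converts this into an explicit product bound. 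Aldous's line-breaking construction (Proposition~\ref{prop:branch lengths}) does enter, but only to control branch lengths and attachment locations in this spanned subtree (Corollary~\ref{cor:P02} and Lemma~\ref{lem:|U1|}), not to describe the fragmentation itself.
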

In Section~\ref{sec:s=0} we prove Proposition~\ref{prop:path-breaking fixed r}, establishing the case $s=0$ of  Proposition~\ref{prop:pacb}.
We then use Proposition~\ref{prop:path-breaking fixed r} to handle the cases when $s \ge 1$, completing the proof of Proposition~\ref{prop:pacb}, in Section~\ref{sec:s>0}. 

For what follows it is useful to introduce the notation $u \lr{G} v$ to mean that there exists a path from $u$ to $v$ in graph $G$ (i.e., $u$ and $v$ are vertices of $G$ lying in the same connected component of $G$). 

\subsubsection{Proof of Proposition~\ref{prop:path-breaking fixed r}.}
\label{sec:s=0}
For $q \geq 1$, let $T_q$ be uniformly distributed over the set of trees with vertex set $[q]$. Let $F_q$ be the outcome of the path-breaking process on $T_q$ with starting set $[r]$ and an exchangeable random ordering $\textbf{e}=(e_1,\dots,e_{q-1})$ of $e(T_q)$. Then, for independent, uniformly random vertices $X,Y\in_u [q]$,
\begin{align} \label{eq:|C| bound}
\p{X \lr{F_q} Y}
&=
\E{\Cprob{X \lr{F_q} Y}{F_q}}\nonumber\\
&=
\E{\sum_{C \text{ is a component of } F_q} \frac{|v(C)|^2}{q^2}}\nonumber\\
&\geq
\E{\frac{\max(|v(C)| : C \text{ is a component of } F_q)^2}{q^2}} \nonumber\\
&\geq 
\frac{\E{\max(|v(C)| : C \text{ is a component of } F_q)}^2}{q^2} \,.
\end{align}
We thus analyze the probability that independent samples $X,Y\in_u[q]$ are connected in $F_q$. For the bulk of the analysis, it is in fact useful to instead consider $U,W$ sampled uniformly {\em without} replacement, from the set $\{r+1,\dots,q\}$. Since $\p{X = Y} + \p{X \in [r]} + \p{Y \in [r]} \to 0$ as $q \to \infty$, the error term this adds to the above bound is asymptotically negligible.

For a tree $t$ and a set $S \subseteq v(t)$, write $\hull{t}{S}$ for the smallest subtree of $t$ containing $S$. 

\begin{lem} \label{lem:path-breaking restriction}
Fix $q \ge r+2$, let $U,W$ be sampled uniformly without replacement from $\{r+1,\dots,q\}$, let $M_q=|e(\hull{T_q}{\{U,W\} \cup [r]})|$, and let $(e_{i_1},\dots,e_{i_{M_q}})$ be the restriction of the exchangeable random ordering $\textbf{e}$ to $e(\hull{T_q}{\{U,W\} \cup [r]})$. Then $U \lr{F_q} W$ if and only if $U \lr{F_q'} W$ where $F_q'$ is the outcome of the path-breaking process on $\hull{T_q}{\{U,W\} \cup [r]}$ with starting set $[r]$ and edge ordering $(e_{i_1},\dots,e_{i_{M_q}})$.
\end{lem}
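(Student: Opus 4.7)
The plan is to couple the two path-breaking processes edge-by-edge and show that they agree on all hull edges; the connectivity of $U$ and $W$ in each process then reduces to whether the (common) $T_q$-path from $U$ to $W$ survives the deletions.

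The key structural observation is that for any $u,v \in \{U,W\} \cup [r]$, the unique $T_q$-path between $u$ and $v$ lies entirely in $H := \hull{T_q}{\{U,W\} \cup [r]}$, since $H$ is a subtree containing $u$ and $v$ and tree paths are unique. Two consequences follow. First, no edge outside $H$ is ever removed by the $T_q$-process: at any intermediate stage, a path between two distinct vertices of $[r]$ is a subpath of the full $T_q$-path between them, which is contained in $H$. Second, the $T_q$-path from $U$ to $W$ uses only hull edges, so $U \lr{F_q} W$ iff every edge of this path survives in $F_q$, and analogously $U \lr{F_q'} W$ iff every edge of this path survives in $F_q'$.

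It therefore suffices to show that each hull edge is removed by the $T_q$-process exactly when it is removed by the hull process. I would prove this by induction on the step index $i$. Let $H_i$ denote the state of the $T_q$-process after $i$ steps, let $j(i) := |\{1 \le \ell \le i : e_\ell \in e(H)\}|$, and let $H'_{j(i)}$ denote the state of the hull process after $j(i)$ of its own steps. The inductive claim is $e(H_i) \cap e(H) = e(H'_{j(i)})$. The base case $i=0$ is immediate, since $e(T_q) \cap e(H) = e(H)$. For the inductive step: if $e_i \notin e(H)$ then by the first consequence above $e_i$ survives in the $T_q$-process and $j(i)=j(i-1)$, so both sides are unchanged; if $e_i \in e(H)$ then $e_i = e_{i_{j(i)}}$, and its removal in the $T_q$-process is governed by the existence of a path in $H_{i-1}$ between two distinct vertices of $[r]$ passing through $e_i$. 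Since any such path is a subpath of the $T_q$-path between its endpoints, it lies in $H$, and the condition depends only on $H_{i-1} \cap H = H'_{j(i-1)}$ by the inductive hypothesis. This is exactly the condition determining whether $e_i$ is removed in the hull process.

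Combining the two reductions yields the lemma. The argument is purely deterministic — the exchangeability of $\be$ is not used here, it only becomes relevant when passing to the distribution of $F_q'$ — and the only delicate point is verifying in the inductive step that the two notions of ``lying on a path between distinct $[r]$-vertices'' agree between the global process on $T_q$ and the restricted process on $H$. This agreement, which is the main (mild) obstacle, follows cleanly from the uniqueness of paths in the tree $T_q$.
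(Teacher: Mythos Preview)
Your proof is correct and follows essentially the same approach as the paper's: both argue that edges outside the hull are never removed (since paths between $[r]$-vertices lie in $H$), and then that the two processes make identical decisions on hull edges, with the paper simply writing ``it follows (by induction)'' where you spell out the inductive step in detail. Your explicit verification that the removal criterion for a hull edge depends only on $e(H_{i-1})\cap e(H)$ is exactly the content the paper's parenthetical induction is pointing to.
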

\begin{proof}
For $i \not\in\{i_1,\ldots,i_{M_q} \}$, 
the edge $e_i$ does not lie on a path between any pair of elements of $[r]$, so is not removed by the path-breaking process on $T_q$ with starting set $[r]$ and edge ordering $(e_1,\ldots,e_{q-1})$. 
It follows (by induction) that 
the path-breaking process on $\hull{T_q}{\{U,W\}\cup [r]}$ with starting set $[r]$ and edge ordering $(e_{i_1},\dots,e_{i_{M_q}})$ removes the same edges, in the same order, as the previously mentioned path-breaking process on $T_q$. Hence, $F_q' = \hull{F_q}{\{U,W\} \cup [r]}$ 
so $U$ and $W$ are connected in $F_q$ if and only if they are connected in $F_q'$. 
\end{proof}
For ease of notation, let $T_q'=T_q'(r,\be)$ be the tree obtained from $\hull{T_q}{\{U,W\} \cup [r]}$ by relabeling $U,W$ as $r+1,r+2$, relabeling the vertices of $v\big(\hull{T_q}{\{U,W\} \cup [r]}\big) \setminus (\{U,W\} \cup [r])$ in increasing order as $\{r+3,\dots,M_q\}$, and relabeling the edges $(e_{i_1},\dots,e_{i_{M_q}})$ as $\be'=(e_1,\dots,e_{M_q})$. In a small abuse of notation we continue to denote the relabelings of $U,W$ by $U$ and $W$ rather than by $r+1$ and $r+2$. Finally, write $F_q'=F_q'(r,\be')$ for the outcome of the path breaking process on $T_q'$ with starting set $[r]$ and edge ordering $\be'$. 

For all $m \geq r+2$, let $\cT_m =\cT_m(r)$ be the set of trees with vertex set $[m]$ and with leaf set a subset of $[r+2]$. Note that since $T_q$ is a uniformly random tree with vertex set $[q]$, by symmetry, $T_q' \in_u \cT_{M_q}$, in the sense that for all $m \ge r+2$, conditionally given that $M_q=m$, then $T_q' \in_u \cT_m$. 

The definitions of this paragraph are illustrated in Figure~\ref{fig:uj}. 
For $T \in \cT_m$, let $P_0 = P_0(T) = \hull{T}{\{U,W\}}$, and for all $1 \leq i \leq r$ let $P_i = P_i(T)$ be the path in $T$ connecting $i$ to $\hull{T}{\{U,W\} \cup [i-1]}$. (If $i \in v(\hull{T}{\{U,W\} \cup [i-1]})$ then $P_i$ consists of a single vertex and no edges; otherwise $P_i$ has edge set $e(\hull{T}{\{U,W\} \cup [i]}) \setminus e(\hull{T}{\{U,W\} \cup [i-1]})$.) Then, for each $j \in \{1,2,3\}$ let $P_{0,j} = P_{0,j}(T)$ be the subpath of $P_0(T)$ with vertex set 
\[
\left\{ v \in v(P_0(T)) : \left\lfloor \dist_T(U,W) \frac{j-1}{3} \right\rfloor \leq \dist_T(U,v) \leq \left\lfloor \dist_T(U,W) \frac{j}{3} \right\rfloor \right\} \,,
\]
and let $\cU_j = \cU_j(T,r,q)$ be the subset of $[r]$ satisfying the following additional properties: for all $i \in \cU_j$,
\begin{enumerate}
\item $v(P_i) \cap v(P_{\ell}) = \emptyset$ for all $\ell \in [r] \setminus \{i\}$,
\item $P_i \cap P_0$ is a vertex in $P_{0,j}$, and
\item $1 \leq |e(P_i)| \leq \sqrt{q}.$
\end{enumerate}
Additionally, write $\cP_j = \left\{ P_i : i \in \cU_j \right\}$.
\begin{figure}[htb]
\includegraphics[width=0.9\textwidth]{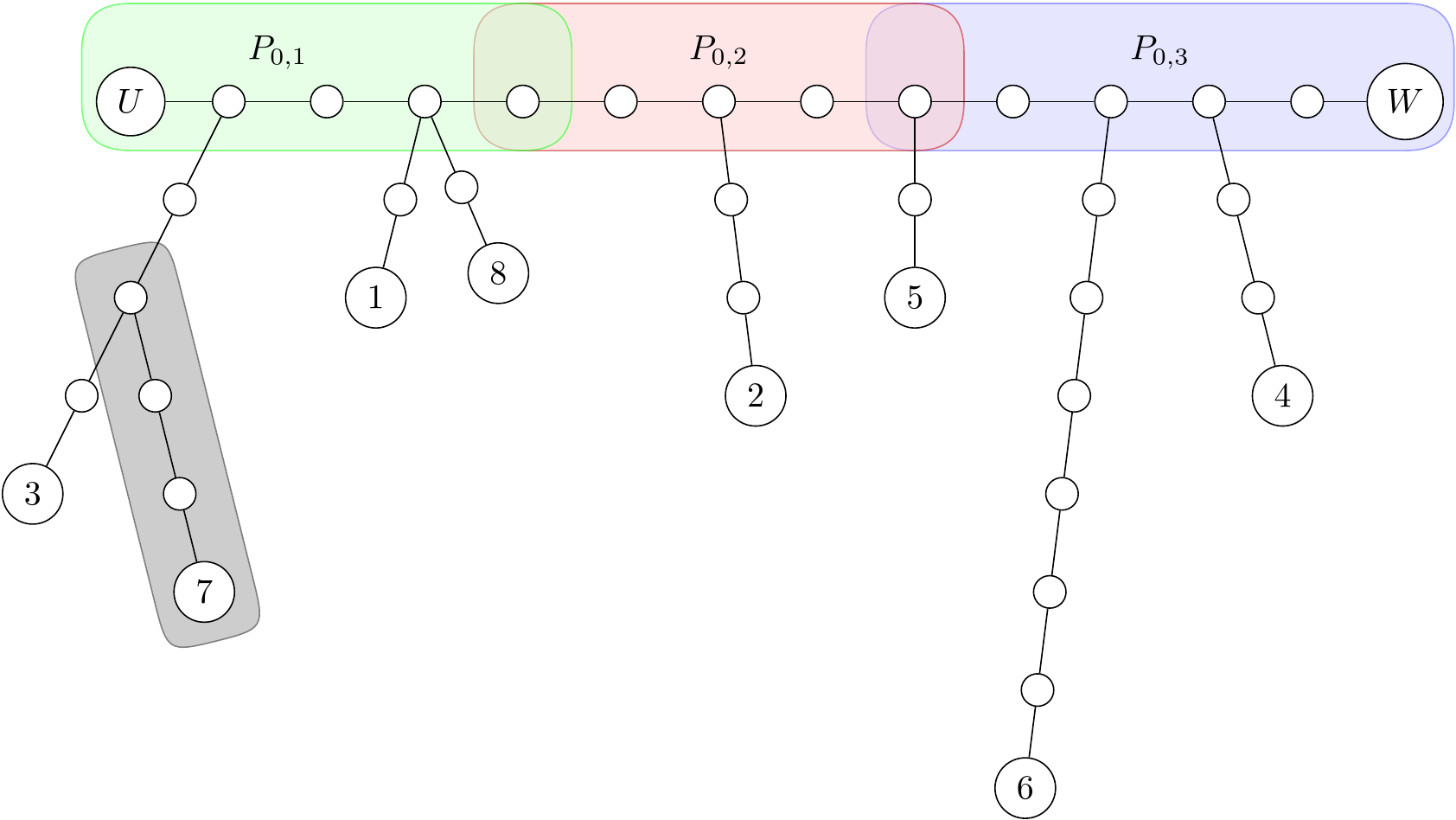}
\caption{An example of $T \in \cT_{40}$ with $r=8$ and $q=48$. In this example, $P_0$ connects vertices $U=r+1$ and $W=r+2$, the path $P_7$ is shaded in grey, and all the other paths $P_i$ connect the vertex labeled $i$ to the path $P_0$. 
Also, $\cU_1(T) = \emptyset$, $\cU_2(T) = \{2,5\}$, and $\cU_3(T) = \{4\}$. Note that $6 \notin \cU_3$ as $|e(P_6(T))| = 7 > \sqrt{48}$.} 
\label{fig:uj}
\end{figure}

For all paths $P \in T_q'$, say $P$ is {\em targeted} at time $t$ if $e_t \in e(P)$, say $P$ is {\em targeted for the first time} at time $t$ if $P$ is targeted at time $t$ and not before time $t$, and say $P$ is {\em broken} at time $t$ if $P$ is targeted at time $t$ and $e_t$ is removed during the path-breaking process. In the next lemma (and the subsequent Corollary \ref{cor:uv connected deterministic}), write $P_i = P_i(T_q')$ for $0 \leq i \leq r$, and write $P_{0,j} = P_{0,j}(T_q')$, $\cU_j = \cU_j(T_q')$ and $\cP_j = \cP_j(T_q')$ for $j \in \{1,2,3\}$.
\begin{lem}\label{lem:connection condition}
If $U \lr{F_q'} W$ and both $|\cU_1|\ge 1$ and $|\cU_3| \ge 1$, 
then all but at most one path in $\cP_1$ and all but at most one path in $\cP_3$ were targeted before $P_{0,2}$ was targeted for the first time.
\end{lem}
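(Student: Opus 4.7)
The plan is to argue by contradiction: I assume $U \lr{F_q'} W$ together with $|\cU_1|, |\cU_3| \ge 1$, and try to derive that whenever at least one path in $\cP_1$ and at least one path in $\cP_3$ are simultaneously un-targeted before $t$, the edge $e_t$ must be broken, contradicting the first assumption.

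The starting point is that $U \lr{F_q'} W$ forces every edge of $P_0$ to survive the entire process: since $F_q' \subseteq T_q'$ and $T_q'$ is a tree with unique $U$--$W$ path $P_0$, the relation $U \lr{F_q'} W$ is equivalent to no edge of $P_0$ ever being broken during the path-breaking process. In particular, $e_t \in P_{0,2}$ is not broken at time $t$, so the component of $H_{t-1}$ containing $e_t$ has starting vertices on at most one side of $e_t$, and all of $P_0$ is intact throughout $H_{t-1}$.

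Next, I would establish the structural rigidity that, for each $i \in \cU_1 \cup \cU_3$, the path $P_i$ is a \emph{pendant} path in $T_q'$: its internal vertices have degree $2$ in $T_q'$, and $i$ itself is a leaf of $T_q'$. The argument is that any branch off $P_i$ would contain a leaf in $\{U, W\} \cup [r]$ (since all leaves of $T_q'$ lie there), and such a leaf would force $P_i$ to share a vertex with either $P_0$ outside the attachment vertex $P_i \cap P_0$ or with some $P_\ell$ for $\ell \ne i$, violating condition~(1) or condition~(2). A direct consequence is that $P_i$ is intact in $H_s$ if and only if $i$ is still connected to $P_i \cap P_0$ in $H_s$. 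Combining: if $P_i$ with $i \in \cU_1$ is un-targeted before $t$, then via the intact $P_i$, an intact sub-path of $P_{0,1}$ (from $P_i \cap P_0$ to the $P_{0,1}/P_{0,2}$ junction), and an intact sub-path of $P_{0,2}$ (from that junction to an endpoint of $e_t$), the vertex $i$ sits in the $A$-side of $e_t$'s component in $H_{t-1}$. Symmetrically, any un-targeted $P_j \in \cP_3$ places $j$ on the $B$-side. Hence having at least one un-targeted path in each of $\cP_1$ and $\cP_3$ at time $t-1$ would put starting vertices on both sides of $e_t$'s component, forcing $e_t$ to be broken and contradicting the first step.

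This already yields the weaker statement that at least one of $\cP_1, \cP_3$ has \emph{all} its paths targeted before $t$. The main obstacle is strengthening this to the stated ``at most one un-targeted in each of $\cP_1, \cP_3$''. I expect this requires a finer analysis, in the case where $\cP_3$ is the fully-targeted side, of the first $\cP_3$-break: exploiting the pendant structure, the total count $r - 1$ of broken edges during the process, and the fact that none of them can lie on $P_0$, one should be able to track how the individual breaks of $\cP_3$-paths force early targetings in $\cP_1$ — either by compelling an isolation sequence dictated by the exchangeable ordering, or by producing a cascade that eventually forces a break on $P_0$ itself — thereby yielding the full conclusion.
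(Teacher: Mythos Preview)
Your core argument---what you call steps 1 through 3---is precisely the paper's proof. The paper argues: if some $P_i\in\cP_1$ and some $P_j\in\cP_3$ are both un-targeted when $P_{0,2}$ is first targeted at time $t$, then the $i$--$j$ path in $T_q'$ decomposes as $P_i\cup Q_{0,1}\cup P_{0,2}\cup Q_{0,3}\cup P_j$ with $Q_{0,1}\subseteq P_{0,1}$ and $Q_{0,3}\subseteq P_{0,3}$. Since $P_i,P_j,P_{0,2}$ are intact in $H_{t-1}$, either $e_t$ is cut or some edge of $Q_{0,1}\cup Q_{0,3}\subseteq P_0$ was already cut; either way $U\not\lr{F_q'}W$. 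Your detour through the pendant-path structure is unnecessary: all one needs is that an un-targeted path has all of its edges present in $H_{t-1}$.

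Your step 4 is correct and in fact sharper than the paper's presentation: the argument genuinely only yields the \emph{disjunction} ``all of $\cP_1$ targeted before $t$, or all of $\cP_3$ targeted before $t$'', not the stated \emph{conjunction} ``at most one un-targeted in each of $\cP_1$ and $\cP_3$''. The paper's own proof establishes exactly this same disjunction and nothing more.

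Your step 5, however, cannot succeed, because the stated conjunction is false. Take $r=4$, with vertex $1$ attached by a single edge to some $v_1\in v(P_{0,1})$ and vertices $2,3,4$ attached by single edges to distinct vertices $v_2,v_3,v_4\in v(P_{0,3})$, so that $\cU_1=\{1\}$ and $\cU_3=\{2,3,4\}$. Order the edges as $e_1=\{1,v_1\}$, then $e_2\in e(P_{0,2})$, then $e_3=\{2,v_2\}$, $e_4=\{3,v_3\}$, then the remaining edges arbitrarily. The path-breaking process removes $e_1$; it keeps $e_2$ (after removing $e_1$ there is no source on the $U$-side of $e_2$ in $H_1$); it then removes $e_3$ and $e_4$; and after that only one source remains in the main component, so nothing further is removed. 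Hence $P_0$ is intact and $U\lr{F_q'}W$, yet at time $t=2$ all three paths of $\cP_3$ are un-targeted. So abandon the plan in step~5: the disjunction you already proved is the true statement, and it is exactly what the paper's proof establishes.
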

\begin{proof}
 Fix $1 \leq t \leq M_q$. Suppose that $P_i \in \cP_1$ and $P_j \in \cP_3$ have not been targeted by time $t$, and that $P_{0,2}$ is targeted for the first time at time $t$. Let $P_{i,j}$ be the path from $i$ to $j$ in $T_q'$. Then $P_{i,j} = P_i \cup Q_{0,1} \cup P_{0,2} \cup Q_{0,3} \cup P_j$ where $Q_{0,1} \subseteq P_{0,1}$ and $Q_{0,3} \subseteq P_{0,3}$. Hence, either $e_t \in P_{0,2}$ is cut, or an edge in $P_{i,j}$ has already been cut. In the latter case, since none of $P_i, P_j$, or $P_{0,2}$ have been targeted by time $t$, the edge that was already cut must be an edge of $Q_{0,1} \cup Q_{0,3} \subseteq P_0$. In both cases, an edge in $P_0$ is cut during the path-breaking process, meaning $U$ and $W$ are not connected in $F_q'$.
\end{proof}
For the next corollary, it is useful to introduce the shorthand
\[
\phat{\cdot} = \Cprob{\cdot}{T_q'} \,.
\]

\begin{cor}\label{cor:uv connected deterministic}
Let $E_q(p,u)$ be the event that $|e(P_{0,2})| \geq p$, $|\cU_1| \geq u$ and $|\cU_3| \geq u$. Then for $u \geq 1$,
\begin{align*}
\Cprobhat{U \lr{F_q'} W}{E_q(p,u)}
& \le \prod_{j=0}^{2u-3} \left( 1 - \frac{p}{p + \sqrt{q}(2u-j)} \right) \,.
\end{align*}
\end{cor}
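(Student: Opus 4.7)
The plan is to combine Lemma~\ref{lem:connection condition} with an analysis of the order in which the paths $P_{0,2}$ and $\{P_i:i\in\cU_1\cup\cU_3\}$ are first targeted by the random edge ordering $\be'$. To keep the bound uniform over the sizes of $\cU_1$ and $\cU_3$, I would first fix arbitrary subsets $\cU_1'\subseteq\cU_1$ and $\cU_3'\subseteq\cU_3$ of size exactly $u$, which is possible on $E_q(p,u)$; write $\cP_1'=\{P_i:i\in\cU_1'\}$ and similarly $\cP_3'$. By Lemma~\ref{lem:connection condition}, on $\{U\lr{F_q'} W\}\cap E_q(p,u)$ all but at most one path in $\cP_1$ (respectively $\cP_3$) is first targeted before $P_{0,2}$; a one-line inclusion-exclusion then forces at least $u-1$ paths in $\cP_1'$ (respectively $\cP_3'$) to be first targeted before $P_{0,2}$. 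In particular, $P_{0,2}$ is not among the first $2u-2$ of the $2u+1$ paths in $\cP_1'\cup\cP_3'\cup\{P_{0,2}\}$ to be first targeted.

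The key structural observation is that the edge sets of $P_{0,2}$ and of the $P_i$ for $i\in\cU_1'\cup\cU_3'$ are pairwise disjoint: condition~(1) in the definition of $\cU_j$ makes the $P_i$'s mutually vertex-disjoint off of $P_0$, while condition~(2) forces $P_i$ with $i\in\cU_j$ to meet $P_0$ only at a single vertex in $P_{0,j}$, so $e(P_i)\cap e(P_{0,2})=\emptyset$ whenever $j\ne 2$. Since $\be'$ is exchangeable, its restriction to the union of these disjoint edge sets is a uniformly random ordering of those edges; by the standard correspondence between uniform orderings of disjoint blocks and size-biased sampling without replacement, the order in which these $2u+1$ paths are first targeted is that of a sequential sampling in which each step selects a path with probability proportional to its current edge count among the unchosen paths.

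It then remains to bound the probability that the first $2u-2$ picks of this sequential sampling all lie in $\cP_1'\cup\cP_3'$. By hypothesis $|e(P_{0,2})|\ge p$, and by condition~(3) in the definition of $\cU_j$ each $P_i$ with $i\in\cU_1'\cup\cU_3'$ has $|e(P_i)|\le\sqrt q$, so after $j$ picks from $\cP_1'\cup\cP_3'$ have been made the remaining paths in $\cP_1'\cup\cP_3'$ contribute at most $(2u-j)\sqrt q$ edges. The conditional probability that the $(j+1)$st pick avoids $P_{0,2}$ is therefore at most $(2u-j)\sqrt q/(p+(2u-j)\sqrt q)=1-p/(p+(2u-j)\sqrt q)$, and multiplying these bounds over $j=0,1,\dots,2u-3$ gives the claimed inequality (the empty product when $u=1$ yielding the trivial bound $1$). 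The main technical point is justifying the size-biased sequential sampling representation from exchangeability and carefully fixing a size-$u$ subfamily to keep the estimate uniform in $|\cU_1|,|\cU_3|$; the rest is elementary bookkeeping using the defining properties of $\cU_j$.
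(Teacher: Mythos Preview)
Your argument is correct and follows essentially the same approach as the paper: both combine Lemma~\ref{lem:connection condition} with the observation that the order in which $P_{0,2}$ and the paths in $\cP_1\cup\cP_3$ are first targeted is size-biased (from exchangeability of $\be'$ and edge-disjointness), then bound the probability that $P_{0,2}$ appears late using $|e(P_{0,2})|\ge p$ and $|e(P_i)|\le\sqrt q$.

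The one genuine, if minor, difference is how the inequality constraint $|\cU_1|,|\cU_3|\ge u$ is handled. The paper first conditions on the event $E_q^=(p',u')$ of exact equality, obtains the product bound with $p',u'$ in place of $p,u$, and then observes that this bound is monotone decreasing in $p'$ and $u'$, so averaging over $p'\ge p$, $u'\ge u$ via the law of total probability gives the claim. You instead pass to fixed size-$u$ subfamilies $\cU_1'\subseteq\cU_1$, $\cU_3'\subseteq\cU_3$ and note that Lemma~\ref{lem:connection condition} still forces at least $u-1$ of each subfamily to be targeted before $P_{0,2}$ (this is just pigeonhole, not really ``inclusion--exclusion''). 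Your route is arguably a touch cleaner since it avoids the monotonicity step, while the paper's route makes the role of the parameters $p,u$ slightly more transparent; both are short and yield the same bound.
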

\begin{proof}
Let $\cS = \cP_1 \cup \cP_3 \cup \{P_{0,2}\}$. For all paths $P \in \cS$, let $t_{P}$ be the first time $P$ is targeted. Then let $s = \big| \{P \in \cS : t_P < t_{P_{0,2}} \} \big|$. By Lemma \ref{lem:connection condition}, if $U$ and $W$ are connected in $F_q'$ and both $\cU_1$ and $\cU_3$ are nonempty, then $s \geq |\cS|-3$. 

Let $E_q^=(p,u)$ be the event that $|e(P_{0,2})| = p$, $|\cU_1| = u$ and $|\cU_3| = u$. List the paths in $\cS$ in the order they are first targeted as $P^{(1)},\dots,P^{(m)}$. By the exchangeability of the ordering $\be'=(e_1,\dots,e_{M_q})$, this list is a size-biased random ordering of the paths in $\cS$: that is, for all $1 \leq j \leq m$ and all $P \in \cS$,
\[
\Cprobhat{P^{(j)} = P}{P^{(1)},\dots,P^{(j-1)}} 
=
\frac{|P| \I{P \notin \{P^{(1)},\dots,P^{(j-1)}\}}}{\sum_{Q \in \cS} |Q| \I{Q \notin  \{P^{(1)},\dots,P^{(j-1)}\}}} \,.
\]
Since all paths in $\cS$ aside from $P_{0,2}$ have length at most $\sqrt{q}$, it follows that for all $0 \leq j \leq m$, 
\[
\Cprobhat{P^{(j)} = P_{0,2}}{E_q^=(p,u),P_{0,2} \notin \{P^{(1)},\dots,P^{(j-1)}\}}
\geq
\frac{p}{p + \sqrt{q}(2u-(j-1))} \,.
\]
Thus, by Bayes' formula, and since $m = 2u+1$ when $E^=_q(p,u)$ occurs, we have
\begin{align}
&\Cprobhat{s \geq |\cS|-3}{E_q^=(p,u)}
\nonumber\\
= \ &
\Cprobhat{P_{0,2} \notin \{P^{(1)},\dots,P^{(m-3)}\}}{E_q^=(p,u)}
\nonumber\\
= \ &
\prod_{j=0}^{m-4} \Cprobhat{P_{0,2} \ne P^{(j+1)}}{E_q^=(p,u),P_{0,2} \notin \{P^{(1)}, \dots ,P^{(j)}\}} 
\nonumber\\
\leq \ &
\prod_{j=0}^{2u-3} \left( 1 - \frac{p}{p + \sqrt{q}(2u-j)} \right) \,.\label{eq:eqbd}
\end{align}
Since the above product is decreasing in $p$ and in $u$, and 
$E_q(p,u)$ is the disjoint union of the events $\{E_q^=(p',u'),p' \ge p, u' \ge u\}$, the bound claimed in the corollary follows from \eqref{eq:eqbd} by the law of total probability.
\end{proof}
In order to make use of Corollary \ref{cor:uv connected deterministic}, we now must analyze the typical behaviour of $|e(P_{0,2(T_q')})|$, $|\cU_1(T_q')|$ and $|\cU_3(T_q')|$. We first gather some auxiliary facts which are crucial for this analysis. 

The first facts relate to the asymptotic structure of $T_q'(r)$ for $q$ large. This is described by the so-called {\em line-breaking construction} of Aldous \cite{aldous91continuum1}. 
Let $(\pi_i,i \ge 0)$ be the ordered sequence of inter-arrival times of a Poisson point sequence on $[0,\infty)$ with intensity measure $\lambda(t)=t$. Such a process may be concretely realized as follows. Let $(E_i, i \geq 1)$ be independent $\mathrm{Exp}(1)$ random variables, let $\pi_0 = \sqrt{2} E_1^{1/2}$ and, for each $j \ge 1$, let $\pi_j = \sqrt{2} \left( E_1 + \dots + E_{j+1} \right)^{1/2} -  \sqrt{2} \left( E_1 + \dots + E_{j} \right)^{1/2}$. The atoms of the Poisson process are thus located at the points $\left(\sqrt{2}(E_1+\ldots+E_i)^{1/2},i \ge 1\right)$. 

Now for $r \ge 0$, construct a binary tree $T_\infty(r)$ 
with edge lengths and with leaf labels $U,W$ and $1,\ldots,r$, as follows. The tree $T_\infty(0)$ is a line segment $P_{\infty,0}$ of length $|P_{\infty,0}|=\pi_0$, with endpoints labeled $U$ and $W$. Inductively, for $r \ge 1$ the tree $T_\infty(r)$ is constructed from $T_{\infty}(r-1)$ by attaching a line segment $P_{\infty,r}$ of length $|P_{\infty,r}|
=\pi_r$ to a uniformly chosen point of $T_{\infty}(r-1)$, and assigning label $r$ to the new leaf at the far end of the line segment.

The next proposition is a consequence of \cite[Theorem 8]{aldous91continuum1}. In what follows, if $G$ is an unweighted graph then we write $\dist_G(x,y)$ to mean the graph distance between vertices $x$ and $y$ in $G$ (the fewest number of edges in an $x-y$ path). If $G$ is a graph with edge lengths then we write $\dist_G(x,y)$ to mean the length of the shortest $x-y$ path, taking edge lengths into account.
\begin{prop} \label{prop:branch lengths}
As $q \to \infty$, 
\begin{align*}
& (q^{-1/2}\mathrm{dist}_{T_q'(r)}(x,y):x,y \in \{U,W\} \cup \{1,\ldots,r\})\\
& \convdist 
(\mathrm{dist}_{T_\infty(r)}(x,y):x,y \in \{U,W\} \cup \{1,\ldots,r\})\, .
\end{align*}
Moreover, for all $r \ge 1$, ignoring its edge lengths, the tree $T_{\infty}(r)$ is uniformly distributed over the set of binary trees with leaf labels $\{U,W\}\cup [r]$. Finally, for any permutation 
$\phi:\{U,W,1,\ldots,r\} \to \{U,W,1,\ldots,r\}$, the tree $T^\phi_{\infty}(r)$ obtained from $T_{\infty}(r)$ by relabeling its leaves  according to the permutation $\phi$ has the same law as $T_\infty(r)$. 
\end{prop}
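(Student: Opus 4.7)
The plan is to recognize the three claims as well-known consequences of Aldous's Theorem~8 on the scaling limit of uniform random labeled trees. Observe first that $\hull{T_q}{\{U,W\} \cup [r]}$ is an isometrically embedded subtree of $T_q$, so the graph distances $\dist_{T_q'(r)}(x,y)$ between the marked vertices equal the corresponding $\dist_{T_q}(x,y)$. Next, by the label-exchangeability of the uniform random tree $T_q$ on $[q]$, the joint law of $(\dist_{T_q}(x,y) : x,y \in \{U,W,1,\ldots,r\})$ agrees with that of $(\dist_{T_q}(x,y) : x,y \in S)$ where $S$ is a uniformly chosen $(r{+}2)$-element subset of $[q]$; the probability that such an $S$ intersects $[r]$ or has any other forbidden overlap vanishes as $q \to \infty$, so the correction in moving between the two formulations is asymptotically negligible.

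For the first assertion I would then invoke \cite[Theorem~8]{aldous91continuum1} directly: the joint law of the $q^{-1/2}$-rescaled pairwise distances between $r{+}2$ uniformly sampled marked vertices of a uniform labeled tree on $[q]$ converges to the law of pairwise distances between the marked leaves $\{U,W,1,\ldots,r\}$ of $T_\infty(r)$ as defined by the line-breaking construction. Combined with the identification in the previous paragraph, this yields the distance convergence claimed in the proposition.

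For the binarity and uniform-shape claim, I would use that the attachment point of $P_{\infty,j}$ is drawn uniformly along $T_\infty(j-1)$, which has only finitely many vertices, so the attachment almost surely occurs in the interior of some edge and creates a new degree-$3$ branching; hence $T_\infty(r)$ is binary. The uniform law on the shape then follows from the standard fact that the combinatorial shape of the reduced subtree of the CRT spanned by any fixed number of i.i.d.\ uniform samples is uniformly distributed over labeled binary trees on the sampling labels. Finally, the relabeling invariance $T_\infty^\phi(r) \eqdist T_\infty(r)$ follows from the equivalent description of $T_\infty(r)$ as the weighted subtree of the CRT spanned by $r{+}2$ i.i.d.\ uniform samples, with labels $U, W, 1,\ldots,r$ assigned to these samples in the sampling order: permuting labels among i.i.d.\ points manifestly preserves the joint law of the weighted spanning tree.

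The main obstacle in executing this plan is not conceptual but bookkeeping: verifying that the specific intensity-$t$ Poisson construction in the statement produces precisely the inter-point distance law arising as the $q^{-1/2}$-scaling limit in Aldous's Theorem~8. This requires matching the factor $\sqrt{2}$ in the definition of $\pi_0$ with Aldous's CRT normalization and tracing through the identification of the conditional edge-length distributions given the tree shape, but is not a new calculation.
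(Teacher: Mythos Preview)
Your proposal is correct and matches the paper's approach: the paper does not give a proof but simply states that the proposition is a consequence of \cite[Theorem~8]{aldous91continuum1}. Your outline supplies the natural details (identifying distances in $T_q'(r)$ with those in $T_q$, handling the fixed-versus-random labeling via exchangeability, and reading off the shape and relabeling claims from the line-breaking/CRT description) that make this citation go through.
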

Write $\alpha_r$ for the point of $T_{\infty}(r-1)$ to which $P_{\infty,r}$ is attached. Note that the lengths of the paths $(P_{\infty,i},0 \le i \le r)$ may be recovered from $T_{\infty,r}$ as 
$\pi_i = \dist_{T_\infty(r)}(i,\alpha_i)$. 
Thus, the convergence in Proposition~\ref{prop:branch lengths} directly implies that 
\[
(q^{-1/2}|e(P_0)|,\dots,q^{-1/2}|e(P_r)|) \stackrel{\mathrm{d}}{\to} 
(|P_{\infty,0}|,\ldots,|P_{\infty,r}|)=
(\pi_0,\dots,\pi_r)\]
as $q \to \infty$. 

For later use it's handy to describe the reverse construction (recovering the branches from the tree) in a little more generality. Given a binary tree $t$ with edge lengths and with leaves labeled by the elements of $\{U,W\}\cup [r]$, we define a growing sequence of subtrees $t(0),\ldots,t(r)$ where $t(i)$ is the smallest subtree of $t$ containing the leaves in $\{U,W\}\cup \{1,\ldots,i\}$. We then let $P_0(t)=t(0)$, for $i \in [r]$ we let $P_i(t)$ be the path connecting the leaf $i$ to the subtree $t(i-1)$, and let $\alpha_i(t)$ be the point of $t(i-1)$ to which $P_i(t)$ attaches. With these definitions, we have $P_{\infty,i}=P_i(T_{\infty,r})$ for $0 \le i \le r$.

The following tail bound for $|e(P_{0,2}(T_q'))|$ is a straightforward consequence of Proposition~\ref{prop:branch lengths}. 
\begin{cor} \label{cor:P02}
For all $\eps > 0$ and positive integers $r$, for large enough $q$,
\[
\p{|e(P_{0,2}(T_q'))| < \eps \sqrt{q}}
\leq
5\eps^2 \,.
\]
\end{cor}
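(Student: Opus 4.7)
The plan is to reduce the corollary to a simple tail bound for the exponential distribution, by combining the deterministic relationship between $|e(P_{0,2})|$ and $|e(P_0)|$ with the distributional convergence given by Proposition~\ref{prop:branch lengths}.

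First, I would observe that since $P_0=\hull{T_q'}{\{U,W\}}$ is the path from $U$ to $W$, one has $\dist_{T_q'}(U,W)=|e(P_0)|$; hence the definition of $P_{0,2}$ gives
\[
|e(P_{0,2})|=\Big\lfloor \tfrac{2}{3}|e(P_0)|\Big\rfloor-\Big\lfloor\tfrac{1}{3}|e(P_0)|\Big\rfloor \ge \tfrac{1}{3}|e(P_0)|-1.
\]
Consequently, the event $\{|e(P_{0,2})|<\eps\sqrt{q}\}$ is contained in the event $\{|e(P_0)|<3\eps\sqrt{q}+3\}$. Choose a small $\delta\in(0,\sqrt{10}-3)$ (for instance $\delta=\tfrac{1}{10}$), so that $(3+\delta)^2/2<5$. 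Then for every $q$ large enough that $3/\sqrt{q}<\delta\eps$, we get
\[
\p{|e(P_{0,2}(T_q'))|<\eps\sqrt{q}}\le \p{q^{-1/2}|e(P_0(T_q'))|<(3+\delta)\eps}.
\]

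Next I would invoke Proposition~\ref{prop:branch lengths} with $r=0$ (or simply with the first coordinate), which yields
\[
q^{-1/2}|e(P_0(T_q'))|\convdist |P_{\infty,0}|=\pi_0=\sqrt{2E_1},
\]
where $E_1\sim\mathrm{Exp}(1)$. Since $\pi_0$ has a density, the distribution function is continuous at $(3+\delta)\eps$, so
\[
\limsup_{q\to\infty}\p{q^{-1/2}|e(P_0(T_q'))|<(3+\delta)\eps}=\p{\pi_0\le(3+\delta)\eps}.
\]
A direct computation gives $\p{\pi_0\le a}=\p{E_1\le a^2/2}=1-e^{-a^2/2}\le a^2/2$, so plugging in $a=(3+\delta)\eps$ and using $(3+\delta)^2/2<5$ produces $\p{\pi_0\le(3+\delta)\eps}\le 5\eps^2$. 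For all sufficiently large $q$ the above limsup bound can be upgraded to an actual inequality (at the cost of an arbitrarily small additive error, which one absorbs into the margin $5-(3+\delta)^2/2$), delivering the desired $\p{|e(P_{0,2}(T_q'))|<\eps\sqrt{q}}\le 5\eps^2$.

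There is no real obstacle here; the only mildly subtle point is handling the integer-valued nature of $|e(P_{0,2})|$ versus the continuous limit, which is resolved by the floor bound in the first step and by choosing the slack $\delta$ small enough that the constant $(3+\delta)^2/2$ stays strictly below $5$.
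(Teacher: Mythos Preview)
Your proposal is correct and follows essentially the same route as the paper: reduce to a tail bound on $|e(P_0)|$ via the deterministic inequality $|e(P_{0,2})|\ge \tfrac13|e(P_0)|-1$, then use the distributional convergence of $q^{-1/2}|e(P_0)|$ to $\pi_0=\sqrt{2E_1}$ from Proposition~\ref{prop:branch lengths} together with $1-e^{-a^2/2}\le a^2/2$. The paper simply writes the error as a multiplicative $(1+o_q(1))$ in front of $\tfrac{9}{2}\eps^2$, whereas you introduce an explicit slack $\delta$ with $(3+\delta)^2/2<5$; these are equivalent bookkeeping devices.
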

\begin{proof}
By Proposition \ref{prop:branch lengths}, we have that 
\begin{align*}
\p{|e(P_0(T_q'))| < 3 \eps \sqrt{q}}
&=
(1+o_q(1)) \p{\sqrt{2}(E_1)^{1/2} < 3 \eps}\\
&=
(1+o_q(1)) \p{E_1 < \frac{9 \eps^2}{2}}\\
&=
(1+o_q(1)) \left( 1 - e^{-\frac{9 \eps^2}{2}} \right)\\
&\le
(1+o_q(1)) \frac{9 \eps^2}{2} \,. 
\end{align*}
The result now follows from the fact that $|e(P_{0,2}(T_q'))| \geq \frac{1}{3} |e(P_0(T_q'))| - 1$.
\end{proof}
The next lemma states the tail bound we need for $|\cU_1(T_q')|$ and $|\cU_3(T_q')|$.
\begin{lem} \label{lem:|U1|}
For all $\eps > 0$ sufficiently small, there exists a positive integer $r_0$ such that for all $r \ge r_0$, for all $q$ sufficiently large, with $T_q'=T_q'(r,\be)$, 
\[
\p{|\cU_1(T_q')| < \eps \sqrt{r}} < 23\eps \,,
\]
and the same bound holds with $|\cU_1(T_q')|$ replaced by $|\cU_3(T_q')|$. 
\end{lem}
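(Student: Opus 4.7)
The plan is to reduce the question to an analysis in the continuous limit $T_\infty(r)$ via Proposition~\ref{prop:branch lengths} and then exploit the explicit line-breaking construction of $T_\infty(r)$. For fixed $r$, that proposition gives that $T_q'(r)$ rescaled by $q^{-1/2}$ converges in distribution to $T_\infty(r)$ in the sense of joint convergence of leaf-to-leaf distances, which determines the branching topology. Under this rescaling the three conditions defining $\cU_1$ translate to natural continuous analogues: condition (3), $1 \le |e(P_i)| \le \sqrt{q}$, becomes $\pi_i \le 1$ (the lower bound is automatic since $i$ is a leaf); condition (2) becomes $\alpha_i \in P_{\infty,0,1}$, where $\alpha_i$ denotes the point of $T_\infty(i-1)$ at which the branch $P_{\infty,i}$ is attached; and condition (1), once condition (2) holds, reduces to "no $\alpha_\ell$ with $\ell > i$ lies in the interior of $P_{\infty,i}$". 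Since the joint law of $(\pi_i,\alpha_i)$ is atomless, the boundary events have probability zero, so $|\cU_1(T_q'(r))| \convdist |\cU_1(T_\infty(r))|$. It therefore suffices to bound $\p{|\cU_1(T_\infty(r))| < \eps \sqrt{r}}$ for $r$ large.

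In $T_\infty(r)$, I work on a typical event $E$ on which $\pi_0 \ge a_1$ and the cumulative length $L_i := \pi_0 + \ldots + \pi_{i-1} = \sqrt{2 S_i}$ is comparable to $\sqrt{2 i}$ uniformly for $i \ge r/2$, where $a_1 = a_1(\eps) > 0$ is chosen small. Using the tail estimate $\p{\pi_0 < a_1} = 1 - e^{-a_1^2/2}$ and standard concentration of $S_i/i$, the constants can be chosen so that $\p{E^c} \le c_1 \eps$. Conditional on the lengths, the attachment points $\alpha_1,\ldots,\alpha_r$ are independent, with $\alpha_i$ uniform on $T_\infty(i-1)$. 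I focus on the window $i \in [(1-\delta)r, r]$ for a small parameter $\delta=\delta(\eps)$ to be chosen, and set $A_i = \{\alpha_i \in P_{\infty,0,1}\}$, $B_i = \{\pi_i \le 1\}$, $C_i = \{\alpha_\ell \notin P_{\infty,i}, \forall \ell > i\}$. On $E$, $\Cprob{A_i}{\text{lengths}} = \pi_0/(3 L_i) \gtrsim 1/\sqrt{r}$ for $i$ in this window, and $B_i$ is a length event that holds for all $i \ge i_0(\eps)$.

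The heart of the argument is a comparison of a ``good'' count to a ``discarded'' count. Since the $A_i$ are conditionally independent given lengths, a Chernoff bound yields $\sum_{i \in [(1-\delta)r,r]} \I{A_i \cap B_i} \ge c_2 \delta \sqrt{r}$ with probability $1 - o_r(1)$. For the discarded count, the expected number of $i$ for which $A_i \cap B_i$ holds but $C_i$ fails is bounded by
\[
\sum_{i=(1-\delta)r}^{r} \sum_{\ell=i+1}^{r} \frac{\pi_0}{3 L_i} \cdot \frac{\pi_i}{L_\ell} = O(\delta^2 \sqrt{r})
\]
on $E$, using $L_i,L_\ell \asymp \sqrt{r}$ together with the telescoping identity $\sum_i \pi_i = L_{r+1} - L_{(1-\delta)r} = O(\delta \sqrt{r})$. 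Markov's inequality then bounds this discarded count by $c_2 \delta \sqrt{r}/2$ with probability at least $1 - O(\delta)$. Choosing $\delta = \Theta(\eps)$ with constants tuned so that $c_2 \delta/2 \ge \eps$, the Chernoff and Markov estimates combine to give $|\cU_1(T_\infty(r))| \ge \eps \sqrt{r}$ outside an event of probability $c_1 \eps + O(\eps) + o_r(1)$, which is at most $23 \eps$ for $r$ sufficiently large. The main obstacle is the careful balancing of the constants $a_1$ and $\delta$: both the good-count lower bound and the discarded-count Markov bound depend multiplicatively on $\pi_0$, so the feasibility of the choice reduces to verifying that the leading constants are compatible. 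The underlying heuristic, which makes the plan work, is that the discarded count is a factor $\Theta(\delta)$ smaller than the good count, so taking $\delta$ small renders the discards negligible.
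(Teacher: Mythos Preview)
Your reduction to the continuous tree $T_\infty(r)$ via Proposition~\ref{prop:branch lengths} is correct and matches the paper's first step. The subsequent analysis, however, differs substantially from the paper and contains a genuine quantitative gap.

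The paper does \emph{not} use a window-and-discard argument. Instead it invokes an exchangeability lemma (Lemma~\ref{lem:U1_exchange}): for any permutation $\phi$ of $[r]$, the relabelled tree $T_\infty^\phi(r)$ has the same law as $T_\infty(r)$, and one checks that $\cU_1(T_\infty^\phi(r))=\{\phi(i):i\in\cU_1(T_\infty(r))\}$. This forces $\probC{i\in\cU_1}{\pi_0}=\probC{r\in\cU_1}{\pi_0}$ and $\probC{i,j\in\cU_1}{\pi_0}=\probC{r-1,r\in\cU_1}{\pi_0}$. The point is that for $i=r$ (and $i=r-1$) the ``no later branch attaches to $P_{\infty,i}$'' condition is vacuous, so $\probC{r\in\cU_1}{E_1}$ and $\probC{r-1,r\in\cU_1}{E_1}$ have clean closed forms. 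A conditional second-moment (Chebyshev) computation then gives $\Cvar{|\cU_1|}{E_1}=O(\eps)\,\Cexp{|\cU_1|}{E_1}^2$, which is exactly what is needed.

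Your window approach cannot reach $O(\eps)$ with a Markov bound on the discard count. Write out the three constraints your parameters must satisfy: (i) $\p{\pi_0<a_1}\approx a_1^2/2\le C\eps$ forces $a_1=O(\sqrt{\eps})$; (ii) the Markov error $\p{D>G_{\mathrm{mean}}/2}\lesssim \E[D]/G_{\mathrm{mean}}\asymp\delta$ must be $O(\eps)$, forcing $\delta=O(\eps)$; (iii) to conclude $|\cU_1|\ge\eps\sqrt{r}$ you need $G_{\mathrm{mean}}\asymp \pi_0\delta\sqrt{r}\ge 2\eps\sqrt{r}$ on $E$, i.e.\ $a_1\delta\gtrsim\eps$. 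But (i) and (ii) give $a_1\delta=O(\eps^{3/2})\ll\eps$, contradicting (iii). The $\pi_0$-cancellation you note does occur in the ratio $\E[D]/G_{\mathrm{mean}}$, but it does \emph{not} occur in the absolute requirement $G_{\mathrm{mean}}\ge 2\eps\sqrt{r}$, and that is where the argument breaks. Optimising over $a_1,\delta$ the best your scheme yields is $\p{|\cU_1|<\eps\sqrt{r}}=O(\eps^{2/3})$, not $O(\eps)$; so the lemma as stated is not proved. (A version with $23\eps$ replaced by $C\eps^{2/3}$ would still suffice for the downstream application, but that is a different statement.) The missing idea is precisely the exchangeability reduction, which lets one compute the first two moments of $|\cU_1|$ exactly rather than bounding a good count and a discard count separately.
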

The proof of Lemma~\ref{lem:|U1|} is somewhat involved, so before proving it we first use it together with the preceding results of the section to prove Proposition~\ref{prop:path-breaking fixed r}.

\begin{proof}[Proof of Proposition~\ref{prop:path-breaking fixed r}.]
In this proof, for readability we omit insignificant floors and ceilings.
Fix $\eps > 0$ small enough that Lemma~\ref{lem:|U1|} applies, let $r_0$ be 
as in Lemma~\ref{lem:|U1|}, and fix $r \ge r_0$ large enough that 
$\sum_{j=4}^{\eps r^{1/2}} \tfrac{1}{j} \ge \eps^{-1}\log(\eps^{-1})$. For the duration of the proof, write $P_j = P_j(T_q')$ for all $0 \leq j \leq r$ and $P_{0,i} = P_{0,i}(T_q')$ and $\cU_i = \cU_i(T_q')$ for $i \in \{1,2,3\}$. 

Recall from Corollary~\ref{cor:uv connected deterministic} that 
$E_q(p,u)$ is the event that $|e(P_{0,q})| \ge p$, $|\cU_1| \ge u$ and $|\cU_3| \ge u$. 
Taking $p=\eps q^{1/2}$ and $u=\eps r^{1/2}/2$, by that corollary and Bayes' formula we have 
\begin{align*}
\phat{U \lr{F_q'} W}
& \le 
\Cprobhat{U \lr{F_q'} W}{E_q(p,u)}
+
\phat{E_q(p,u)^c}\\
& 
\le
\prod_{j=0}^{2u-3} \left( 1 - \frac{p }{p + q^{1/2}(2u-j)} \right) +\phat{E_q(p,u)^c}\\
& = 
\prod_{j=0}^{\eps r^{1/2}-3} \left( 1 - \frac{\eps}{\eps(1+r^{1/2})-j} \right)+\phat{E_q(p,u)^c}\\
& \le 
\exp\pran{-\eps \sum_{j=0}^{\eps r^{1/2}-3} \frac{1}{\eps(1+r^{1/2})-j}}+\phat{E_q(p,u)^c}
\\
& \le 
\exp\pran{-\eps \sum_{j=4}^{\eps r^{1/2}} \frac{1}{j}}+\phat{E_q(p,u)^c}\\
& \le 
\eps + \phat{E_q(p,u)^c}\, ,
\end{align*}
the last bound holding since we chose $r$ large enough that $\sum_{j=4}^{\eps r^{1/2}} \tfrac{1}{j} \ge \eps^{-1}\log(\eps^{-1})$. 
Taking expectations, the tower law yields that 
\[
\p{U \lr{F_q'} W}
\le \eps + \p{E_q(p,u)^c}. 
\]
By Corollary~\ref{cor:P02} we have 
$\p{|e(P_{0,2})| \le \eps q^{1/2}} \le 5\eps^2$ and by Lemma~\ref{lem:|U1|} we have $\p{\min(|\cU_1|,|\cU_3|) \le \eps r^{1/2}/2} \le 46 \eps$, so 
$\p{E_q(p,u)^c} \le 5\eps^2+46 \eps$. 
Also, by Lemma \ref{lem:path-breaking restriction} we have $\p{U \lr{F_q'} W} = \p{U \lr{F_q} W}$, so we obtain the bound
\[
\p{U \lr{F_q} W} \le 47\eps + 5\eps^2 \, .
\]

To conclude, let $X,Y$ be independent uniform samples from $[q]$. The conditional distribution of $(X,Y)$ given that $X \ne Y$ and that $X \not\in [r],Y \not\in[r]$ is precisely that of $(U,V)$, so 
\begin{align*}
\p{X \lr{F_q} Y}& \le 
\p{U \lr{F_q} W}\!+\p{X=Y}\!+\p{X \in [r]}\!+\p{Y \in [r]} \\
& \le 47 \eps + 5 \eps^2 + \frac{2r+1}{q} \,.
\end{align*}
Finally, by \eqref{eq:|C| bound}, we have 
\[
\p{X \lr{F_q} Y} \geq \frac{\E{\max\left( |C| : C \text{ is a component of } F_q \right)}^2}{q^2},
\]
and so 
\begin{align*}
\E{\max\left( |C| : C \text{ is a component of } F_q \right)} 
&\leq
q \p{X \lr{F_q'} X}^{1/2}\\
&\le q\left(47 \eps + 5\eps^2+ \frac{2r+1}{q}\right)^{1/2} \,.
\end{align*}
The result follows since $\eps>0$ can be taken arbitrarily small, and since we can make $(2r+1)/q$ as small as we like by taking $q$ large. 
\end{proof}

The remainder of the section is devoted to proving Lemma~\ref{lem:|U1|}. We use Proposition~\ref{prop:branch lengths} to allow us to control the large-$q$ behaviour of the probabilities in question by instead studying the limiting tree $T_{\infty}(r)$. 
Given a binary tree $t$ with edge lengths and with leaves labeled by $\{U,W\} \cup[r]$, 
recall that $\alpha_t(i)$ is the attachment point of the line segment $P_{i}(t)$ to $t(i-1)$, and that $|P_{i}(t)|=\dist_{t}(i,\alpha(i))$. 
Let $\cU_1(t)$ be the set of leaves $i \in [r]$ satisfying the following properties. 
\begin{enumerate}
\item $P_{i}(t) \cap P_{j}(t) = \emptyset$ for all $j \in [r]\setminus \{i\}$,
\item $\alpha_i(t) \in P_0(t)$ and $\dist_{t}(\alpha_i(t),U) \le \dist_{t}(U,W)/3$, and
\item $\dist_{t}(i,\alpha_i(t)) \le 1$\, .
\end{enumerate}
Define $\cU_3(t)$ in the same way, but with the second condition replaced by the condition that $\dist_{t}(\alpha_i(t),W) \le \dist_{t}(U,W)/3$.

The convergence in Proposition~\ref{prop:branch lengths} implies that for any $r \ge 1$, as $q \to \infty$ we have $(|\cU_1(T_q',r,q)|,|\cU_3(T_q',r,q)| \convdist (|\cU_1(T_{\infty}(r))|,|\cU_3(T_{\infty}(r))|)$. This allows us to prove the proposition by proving lower tail bounds for $|\cU_1(T_{\infty}(r)|$ and $|\cU_3(T_{\infty}(r)|$. To establish such bounds, we will use the second moment method, applied conditionally given $|\pi_0|$. 
The application of the method is greatly simplified by the following exchangeability result, which allows us to focus our attention on the final two paths $P_{\infty,r-1}$ and $P_{\infty,r}$
\begin{lem}\label{lem:U1_exchange}
Write $\cU_1=\cU_1(T_\infty(r))$. Then for all $i \in [r]$, 
\begin{align*}
& \Cprob{P_{\infty,i}\in\cU_1}{\pi_0} = \Cprob{P_{\infty,r}\in\cU_1}{\pi_0}\,,
\end{align*}
and for all $i,j \in [r]$ with $i \neq j$, 
\begin{align*}
\Cprob{P_{\infty,i}\in\cU_1,P_{\infty,j}\in\cU_1}{\pi_0}
&=
\Cprob{P_{\infty,r-1}\in\cU_1,P_{\infty,r}\in\cU_1}{\pi_0}\,.
\end{align*}
Moreover, the same identities hold with 
$\cU_1$ replaced by $\cU_3=\cU_3(T_\infty(r))$. 
\end{lem}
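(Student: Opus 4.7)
The plan is to recognise that membership in $\cU_1(T_\infty(r))$ (and similarly $\cU_3$) admits a description purely in terms of the labelled tree $T_\infty(r)$ with edge lengths, with no reference to the construction order in which the $P_{\infty,i}$ were built. Once this is done, the identities in the lemma follow immediately from the invariance of $T_\infty(r)$ under leaf-relabellings fixing $U$ and $W$, stated in the last assertion of Proposition~\ref{prop:branch lengths}.

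To establish the intrinsic characterisation, for each $i$ let $B_i$ denote the \emph{leaf branch} of $i$ in $T_\infty(r)$ --- the maximal path starting at $i$ containing no interior branching point --- and let $\beta_i$ be the degree-$3$ vertex at its far end. I would show that, almost surely,
\[
\{P_{\infty,i} \in \cU_1(T_\infty(r))\} = \{|B_i| \le 1,\ \beta_i \in P_{\infty,0},\ \dist_{T_\infty(r)}(\beta_i, U) \le \pi_0/3\}.
\]
The only delicate point is condition~(1) in the definition of $\cU_1$, which I would split into constraints for $j<i$ and for $j>i$. The first case can fail only when $\alpha_i$ coincides with an earlier attachment point $\alpha_j$, which is a measure-zero event since $\alpha_i$ is a continuous uniform sample on $T_\infty(i-1)$; the second case is exactly the assertion that no later $\alpha_k$ lands in the interior of $P_{\infty,i}$, which is equivalent to $P_{\infty,i}=B_i$ and $\alpha_i=\beta_i$. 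Conditions~(2) and~(3) then transcribe directly into the stated location and length conditions on $B_i$.

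Write $\Psi(\ell, t)$ for the predicate asserting that the leaf labelled $\ell$ in a labelled-and-lengthed binary tree $t$ satisfies these three intrinsic conditions. Fix $i \in [r]$ and let $\phi$ be the transposition of $i$ and $r$, so $\phi$ fixes $U$ and $W$. Proposition~\ref{prop:branch lengths} gives $T^\phi_\infty(r) \eqdist T_\infty(r)$, and since $\phi$ fixes $U$ and $W$ we have $\dist_{T^\phi_\infty(r)}(U,W) = \pi_0$; hence the joint laws of $(T_\infty(r),\pi_0)$ and $(T^\phi_\infty(r),\pi_0)$ agree. Because the physical leaf carrying label $r$ in $T^\phi_\infty(r)$ is the same as the physical leaf carrying label $i$ in $T_\infty(r)$, this yields
\[
\Cprob{\Psi(r, T_\infty(r))}{\pi_0} = \Cprob{\Psi(r, T^\phi_\infty(r))}{\pi_0} = \Cprob{\Psi(i, T_\infty(r))}{\pi_0}.
\]
The two-leaf identity is handled identically, using any permutation sending $\{i,j\}$ to $\{r-1,r\}$, and the assertions for $\cU_3$ follow by the same argument with $U$ replaced by $W$ in the intrinsic description. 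The main obstacle is thus the intrinsic reformulation of condition~(1); after that, the exchangeability argument is entirely routine.
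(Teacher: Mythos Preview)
Your proof is correct and follows essentially the same route as the paper. Both arguments rest on the leaf-label exchangeability of $T_\infty(r)$ from Proposition~\ref{prop:branch lengths} together with the $\phi$-invariance of $\pi_0=\dist(U,W)$; the only substantive work is to check that membership in $\cU_1$ behaves well under relabelling, and your intrinsic leaf-branch description and the paper's direct verification that $\cU_1(T_\infty^\phi(r))=\phi(\cU_1(T_\infty(r)))$ are two phrasings of the same observation (namely that when $i\in\cU_1$ the segment $P_{\infty,i}$ meets the rest of the tree only at a single point of $P_0$, so it coincides with the leaf branch $B_i$). If anything, your treatment is slightly more explicit: you separate the measure-zero issue for $j<i$ from the genuine constraint for $j>i$, and you carry the conditioning on $\pi_0$ through directly rather than first conditioning on $|\cU_1|$ and then averaging.
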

\begin{proof}
We work on the probability-one event that $|P_{\infty,i}|> 0$ for all $i \in [r]$. (Note that $P_{\infty,i}=P_i(T_\infty(r))$. 
Fix any permutation $\phi$ of the leaf labels $\{1,\ldots,r\}$, and let $T_\infty^\phi(r)$ be the tree obtained from $T_\infty(r)$ by permuting the labels $\{1,\ldots,r\}$ according to $\phi$; note that labels $U$ and $W$ remain fixed. Any such permutation  induces an automorphism of $T_\infty(r)$ and $T_\infty^\phi(r)$ as binary leaf-labeled trees with edge lengths.

We claim that $\cU_1(T^\phi_\infty(r))=\{\phi(i): i \in \cU_1(T_\infty(r))\}$. 
To see this, fix $i \in [r]$. If $i \in \cU_1(T_\infty(r))$, then $P_{\infty,i} \cap P_{\infty,j}=\emptyset$ for all $j \in [r]\setminus i$, so the only point of intersection of $P_{\infty,i}$ with the rest of $T_\infty(r)$ lies on the path $P_{\infty,0}=P_0(T_\infty(r))$ from $U$ to $W$. 
Writing $P_{\infty,i}^\phi$ for the image of $P_{\infty,i}$ in $T_\infty^\phi(r)$ under the automorphism induced by $\phi$, the only point of intersection of $P_{\infty,i}^\phi$ with the rest of $T_\infty^\phi(r)$ must then lie on the path from $U$ to $W$ in $T_\infty^\phi(r)$, since the labels $U$ and $W$ are unchanged by $\phi$. Thus, $P_{\infty,i}^\phi= P_{\phi(i)}(T_\infty^\phi(r))$, and so $P_{\phi(i)}(T_\infty^\phi(r)) \cap P_j(T_\infty^\phi(r))=\emptyset$ for all $j \in [r]\setminus \{\phi(i)\}$. Since the lengths of $P_{\infty,i}$ and $P_{\infty,i}^\phi$, and their attachment points to the $U-W$ path, are the same in $T_\infty(r)$ and $T_\infty^\phi(r)$, it follows that $\phi(i) \in \cU_1(T_\infty^\phi(r))$. A corresponding argument using $\phi^{-1}$ shows that if $i \in \cU_1(T_\infty^\phi(r))$ then $\phi^{-1}(i) \in \cU_1(T_\infty(r)$, which establishes the claim. 

By Proposition~\ref{prop:branch lengths}, for any permutation $\phi:[r] \to [r]$, the trees $T_\infty^\phi(r)$ and $T_\infty(r)$ have the same law. Since $\cU_1(T^\phi_\infty(r))=\{\phi(i): i \in \cU_1(T_\infty(r))\}$, by taking $\phi$ to be a uniformly random permutation of $[r]$ it then follows that for all $i \in [r]$, and $0 \le s \le r$, 
\[
\Cprob{i \in \cU_1(T_\infty(r))}{|\cU_1(T_\infty(r))|=s} = \frac{s}{r}
\]
and hence 
\[
\Cprob{i \in \cU_1(T_\infty(r))}{|\cU_1(T_\infty(r))|=s}=\Cprob{r \in \cU_1(T_\infty(r))}{|\cU_1(T_\infty(r))|=s} \,.
\]
Similarly, for all $1 \le i<j\le r$, 
\[
\Cprob{i,j \in \cU_1(T_\infty(r))}{|\cU_1(T_\infty(r))|=s} = \frac{s(s-1)}{r(r-1)},
\]
and hence 
\begin{align*}
& \Cprob{i,j \in \cU_1(T_\infty(r))}{|\cU_1(T_\infty(r))|=s}\\
& =
\Cprob{r-1,r \in \cU_1(T_\infty(r))}{|\cU_1(T_\infty(r))|=s}\, .
\end{align*}
The lemma now follows by averaging over $s=|\cU_1(T_\infty(r))|$. 
\end{proof}

\begin{proof}[Proof of Lemma~\ref{lem:|U1|}]
As mentioned earlier, the convergence in distribution from Proposition~\ref{prop:branch lengths} implies that for any $r \ge 1$, as $q \to \infty$ we have $(\cU_1(T_q',r,q),\cU_3(T_q',r,q) \convdist (\cU_1(T_{\infty}(r)),\cU_3(T_{\infty}(r)))$. To prove the lemma it thus suffices to show that 
for all $\eps > 0$ there exists $r_0$ such that for all $r \ge r_0$, 
\begin{equation}\label{eq:toprove}
\p{|\cU_1(T_{\infty}(r))| < \eps \sqrt{r}} < 22 \eps.
\end{equation}
(The same bound then holds for $\cU_3(T_{\infty}(r))$ by symmetry.) The remainder of the proof is thus devoted to establishing \eqref{eq:toprove}. In what follows we write $\cU_1=\cU_1(T_\infty(r))$. 

By Lemma~\ref{lem:U1_exchange}, we have 
\[
\Cexp{\cU_1}{\pi_0}
= 
r\probC{r \in \cU_1}{\pi_0} \,.
\]
On the probability-one event that $\alpha(1),\ldots,\alpha(r)$ are all distinct, $r \in \cU_1$ if and only if $\pi_r \le 1$, $\alpha(r) \in P_{\infty,0}$, and $\mathrm{dist}_{T_{\infty}(r)}(\alpha(r),U) \leq \pi_0 /3$. Since $\alpha(r)$ is uniformly distributed over $T_{\infty}(r-1)$, which is the union of the paths $P_{\infty,0},\ldots,P_{\infty,r-1}$, for $r > 1$ we thus have 
\begin{align*}
\probC{r \in \cU_1}{E_1}& = 
\probC{r \in \cU_1}{\pi_0}\\
& = \Cexp{\probC{r \in \cU_1}{\pi_0,\ldots,\pi_{r}}}{\pi_0}\\
& = \Cexp{\frac{\pi_0/3}{|\pi_0|+\ldots+|\pi_{r-1}|}\I{|\pi_r| \le 1}}{\pi_0}\\
& = \Cexp{\frac{1}{3}\frac{E_1^{1/2}}{(E_1+\ldots+E_{r})^{1/2}}\I{\pi_r \le 1}}{E_1} \,.
\end{align*}
For the first and last identities above, we used that conditioning on $\pi_0$ and on $E_1$ is equivalent, since $\pi_0=\sqrt{2}E_1^{1/2}$.

Likewise, still on the event that $\alpha(1),\ldots,\alpha(r)$ are all distinct, provided that $r \ge 2$, the point $r-1$ belongs to $\cU_1$ if and only if $\pi_{r-1} \le 1$, $\alpha(r-1) \in P_{\infty,0}$, and $\mathrm{dist}_{T_{\infty}(r)}(\alpha(r-1),U) \leq \pi_0 /3$. A similar derivation then shows that for $r \ge 2$, 
\begin{align*}
& 
\probC{r-1\in \cU_1,r \in \cU_1}{E_1}\\
& = 
\Cexp{\frac{1}{9} \frac{E_1^{1/2}}{(E_1+\ldots+E_{r-1})^{1/2}}\frac{E_1^{1/2}}{(E_1+\ldots+E_{r})^{1/2}}\I{\pi_{r-1},\pi_r \le 1}}{E_1} \,.
\end{align*}

We let 
\[
R=\frac{1}{3}\frac{E_1^{1/2}}{(E_1+\dots +E_r)^{1/2}}
\]
and 
\[
R'=\frac{1}{9}\frac{E_1}{(E_1+\dots +E_{r-1})^{1/2} (E_1+\dots +E_r)^{1/2}}\, ,
\]
so that the above identities may be written more succinctly as 
\begin{equation}\label{eq:first_mt}
\probC{r \in \cU_1}{E_1}=\Cexp{R\I{\pi_r \le 1}}{E_1}
\end{equation}
and
\begin{equation}\label{eq:second_mt}
\probC{r-1\in \cU_1,r \in \cU_1}{E_1}=\Cexp{R'\I{\pi_{r-1},\pi_r \le 1}}{E_1}
\end{equation}

Now fix $\eps \in (0,1/2)$ small and let $A(\eps,r)$ be the event that 
\[
(1-\eps)r \le E_2+\ldots+E_{r-1}\le 
E_2+\ldots+E_r\le (1+\eps) r
\]
and that $E_{r} \le r^{1/2}$ and $E_{r+1} \le r^{1/2}$. On $A(\eps,r)$, using the bound $(a+x)^{1/2} - a^{1/2} \leq x/(2a^{1/2})$ for all $a,x > 0$, we have that
\begin{align*}
\pi_{r}
& = \sqrt{2}\left((E_1+\ldots+E_{r+1})^{1/2}-(E_1+\ldots+E_{r})^{1/2}\right)\\
& \le \sqrt{2}\left((E_2+\ldots+E_{r+1})^{1/2}-(E_2+\ldots+E_{r})^{1/2}\right)\\ 
& \le 
\sqrt{2} \frac{E_{r+1}}{2((1-\eps)r)^{1/2}} < 1\, ,
\end{align*}
and likewise $\pi_{r-1} < 1$ on $A(\eps,r)$.

For $r$ large enough, $\p{E_{r} \ge r^{1/2}} =e^{-r^{1/2}} \le 1/r^2$ and $\p{E_{r+1} \ge r^{1/2}} \le 1/r^2$, 
and by Chebyshev's inequality, 
\[
\p{E_2+\ldots+E_{r-1} \le (1-\eps)r} 
\le \frac{2}{(\eps r)^2}
\]
and
\[
\p{E_2+\ldots+E_{r} \ge (1+\eps)r} 
\le \frac{2}{(\eps r)^2},
\]
and hence 
\[
\p{A(\eps,r)^c} \le \frac{2}{r^2} + \frac{4}{(\eps r)^2} \leq \frac{6}{(\eps r)^2} \,.
\]
Moreover, $A(\eps,r)$ is independent of $E_1$, so $\Cprob{A(\eps,r)}{E_1}=\p{A(\eps,r)}$. 

With the preceding bounds at hand, we have enough information to control \eqref{eq:first_mt} and \eqref{eq:second_mt}. Writing $A=A(\eps,r)$, we have 
\begin{align}\label{eq:1st_mt_lb}
\probC{r \in \cU_1}{E_1} & = 
\Cexp{R\I{\pi_r \le 1}}{E_1}\nonumber\\
& 
\ge \Cexp{R\I{\pi_r \le 1}\I{A}}{E_1}\nonumber\\
& = \Cexp{R\I{A}}{E_1} \nonumber\\
& = \Cexp{R}{A,E_1}\Cprob{A}{E_1} \nonumber\\
& \ge \frac{1}{3} \frac{E_1^{1/2}}{(E_1+(1+\eps)r)^{1/2}}\pran{1-\frac{6}{(\eps r)^2}} \,,
\end{align}
and
\begin{align}\label{eq:1st_mt_ub}
\probC{r \in \cU_1}{E_1}
& \le 
\Cexp{R\I{\pi_r \le 1}}{A,E_1} + \Cprob{A^c}{E_1}\nonumber\\
& \le \Cexp{R}{A,E_1} + \frac{6}{(\eps r)^2} \nonumber\\
& \le \frac{1}{3} \frac{E_1^{1/2}}{((1-\eps) r)^{1/2}} + \frac{6}{(\eps r)^2}\, .
\end{align}
We similarly have 
\begin{align}\label{eq:2nd_mt_ub}
\probC{r-1\in \cU_1,r \in \cU_1}{E_1}& = 
\Cexp{R'\I{\pi_{r-1},\pi_r \le 1}}{E_1}\nonumber\\
& 
\le 
\Cexp{R'\I{\pi_{r-1},\pi_r \le 1}}{E_1,A} + \Cprob{A^c}{E_1}\nonumber\\
& \le \Cexp{R'}{A,E_1} + \frac{6}{(\eps r)^2} \nonumber\\
& \le \frac{1}{9}\frac{E_1}{(1-\eps)r} + \frac{6}{(\eps r)^2}\, .
\end{align}
We use the above bounds in the following formulas, which are immediate consequences of Lemma~\ref{lem:U1_exchange} (using the fact that conditioning on $\pi_0$ and on $E_1$ is equivalent):
\begin{equation}\label{eq:exp_formula}
\Cexp{|\cU_1|}{E_1} 
= r\probC{r \in \cU_1}{E_1}
\end{equation}
and 
\begin{align}\label{eq:var_formula}
& \va\left\{|\cU_1|\right|\left.E_1\right\} \nonumber \\
& 
= r(r-1) 
\left(
\Cprob{r-1,r \in \cU_1}{E_1}
-\Cprob{r \in \cU_1}{E_1}^2
\right) \nonumber\\
&
+ r
\left(\Cprob{r \in \cU_1}{E_1}-\Cprob{r \in \cU_1}{E_1}^2\right)\,.
\end{align}
We now temporarily work on the event that $E_1 \in (\eps, \eps^{-1})$. On this event, there is $r_0=r_0(\eps) > 0$ such that for all $r \geq r_0$, 
\begin{align*}
\frac{1}{3} \frac{E_1^{1/2}}{(E_1+(1+\eps)r)^{1/2}} \left(1 -\frac{6}{(\eps r)^2} \right)
&\geq 
\frac{1}{3} \frac{E_1^{1/2}}{((1+2\eps)r)^{1/2}} \,, \\
\frac{1}{3} \frac{E_1^{1/2}}{((1-\eps) r)^{1/2}} + \frac{6}{(\eps r)^2}
&\leq
\frac{1}{3} \frac{E_1^{1/2}}{((1-2\eps) r)^{1/2}} \,, \text{and} \\ 
\frac{1}{9}\frac{E_1}{(1-\eps)r} + \frac{6}{(\eps r)^2}
&\leq
\frac{1}{9}\frac{E_1}{(1-2\eps)r} \,.
\end{align*} 
Using the first of these bounds together with \eqref{eq:1st_mt_lb} in \eqref{eq:exp_formula} gives 
that on the event $\{E_1 \in (\eps,\eps^{-1})\}$,
\begin{equation}\label{eq:exp_lb}
\Cexp{|\cU_1|}{E_1} 
\ge \frac{r^{1/2}}{3}\frac{E_1^{1/2}}{(1+2\eps)^{1/2}}\, ,
\end{equation}
and using all three bounds 
together with \eqref{eq:1st_mt_lb},\eqref{eq:1st_mt_ub} and \eqref{eq:2nd_mt_ub} 
in \eqref{eq:var_formula}, we obtain 
that on the event $\{E_1\in (\eps,\eps^{-1})\}$,
\begin{align*}
& \va
\left\{|\cU_1|\right|\left.E_1\right\} \\
& \leq
r(r-1)
\left(
\frac{1}{9}\frac{E_1}{(1-2\eps)r} - \frac{1}{9}\frac{E_1}{(1+2\eps)r}\right) + 
r\left(
\frac{1}{3} \frac{E_1^{1/2}}{((1-2\eps)r)^{1/2}}
\right)\\
& = (r-1) \frac{E_1}{9} \frac{4\eps}{1-4\eps^2} + r^{1/2} \frac{E_1^{1/2}}{3} \frac{1}{(1-2\eps)^{1/2}}\, \\
& < 5 \eps \Cexp{|\cU_1|}{E_1}^2\, ,
\end{align*}
the final bound holding by~\eqref{eq:exp_lb} for $\eps$ sufficiently small ($\eps < 1/8$ is enough), and still provided that and $r \geq r_0$. 

The lower bound in \eqref{eq:exp_lb} is at least $(r E_1)^{1/2}/4$ provided $\eps$ is small enough, so it now follows by the conditional Chebyshev inequality that 
\begin{align*}
& \Cprob{|\cU_1| \le \frac{(r E_1)^{1/2}}{8},E_1 \in (\eps,\eps^{-1})}{E_1}\\
& \le 
\frac{\va
\left\{|\cU_1|\right|\left.E_1\right\} \I{E_1 \in (\eps,\eps^{-1})}}{(\Cexp{|\cU_1|}{E_1}/2)^2} \\
& \le 
20\eps \I{E_1 \in (\eps,\eps^{-1})} \,.
\end{align*}
For $\eps$ small enough, if $E_1 \ge \eps$ then $E_1^{1/2}/8 > 2\eps$,
so the preceding bound implies that, unconditionally, 
\begin{align*}
& \p{|\cU_1| \le \eps r^{1/2}}\\
& 
=
\E{\Cprob{|\cU_1| \le \eps r^{1/2}}{E_1}}\\
& \le 
\E{\Cprob{|\cU_1| \le \frac{(r E_1)^{1/2}}{8}}{E_1}\I{E_1 \in \eps,\eps^{-1}}}
+ \p{E_1 \not\in(\eps,\eps^{-1})}\\
& \le 22\eps\, ,
\end{align*}
the last bound holding since $\p{E_1 \not\in\eps,\eps^{-1}} < 2\eps$ for $\eps$ small. This establishes \eqref{eq:toprove} and completes the proof. 
\end{proof}

\subsubsection{Proof of Proposition~\ref{prop:pacb}}
\label{sec:s>0}
Having already proved Proposition~\ref{prop:path-breaking fixed r}, to complete the proof of Proposition~\ref{prop:pacb} it remains to handle the cases when $s > 0$. So fix $\eps > 0$ and integers $s > 0$ and $r > 0$, and let $G_q$ and $F_q=F_q(r,\be)$ be as in the statement of Proposition~\ref{prop:pacb}. 
Like in the case $s=0$, it suffices to prove that if $r$ is sufficiently large as a function of $\eps$ and $s$ then for all $q$ sufficiently large, if $X$ and $Y$ are independent, uniformly random elements of $[q]$, independent of $G_q$ and of the ordering $\be$, then 
\[
\p{X \lr{F_q} Y} < \eps\, .
\]
To accomplish this, we decompose $G_q$ into a collection of trees to which we can apply the result from the $s=0$ case, Proposition~\ref{prop:path-breaking fixed r}. We next turn to defining the necessary decomposition. The definitions of the next four paragraphs are illustrated in Figure~\ref{fig:kernel_decomp}.

Let $\core(G_q)$ be the maximum induced subgraph of $G_q$ with minimum degree $2$; equivalently, this is the subgraph of $G_q$ induced by the set of vertices which lie on cycles of $G_q$. For $v \in [q]$ let $c(v)$ be the (unique) closest vertex of $\core(G_q)$ to $v$ in $G_q$. In particular, if $v \in v(\core(G_q))$ then $c(v)=v$. 

If $s \ge 2$ then the {\em kernel} of $G_q$, denoted $K(G_q)$, is  the multigraph obtained from $\core(G_q)$ by contracting each path whose endpoints have degree at least three in $\core(G_q)$ and whose internal vertices have degree two in $\core(G_q)$ into a single edge. 
For each vertex $v$ of $G_q$, we define its ``attachment location on $K(G_q)$'', denoted $\kappa(v)$, as follows. 
For each edge $e$ of $K(G_q)$, if $c(v)$ is an internal vertex of the path which was contracted to make $e$, then set $\kappa(v)=e$. Otherwise, if $c(v)$ is a vertex $w$ of $K(G_q)$ then set $\kappa(v)=w$. 

If $s=1$ then $\core(G_q)$ is a cycle. It is still useful for us to define the kernel in this case, but the definition is slightly different (and slightly non-standard). To define it, we first augment the core by adding all vertices of the path from $q$ to $c(q)$; we write $\core^+(G_q)$ for the subgraph of $G_q$ induced by this path together with $\core(G_q)$. We then define the kernel $K(G_q)$ to be the multigraph obtained from $G_q$ by contracting each maximal path or cycle of $\core^+(G_q)$ whose endpoints lie in $\{q \cup c(q)\}$ to form a single edge. If $q \ne c(q)$ then this creates a ``lollipop'' consisting of a loop edge at $c(q)$ and a single edge from $c(q)$ to $q$; if $q=c(q)$ then the result is simply a loop edge at $c(q)$. 

Provided that $s \ge 1$, so that the kernel is defined, for $a \in v(K(G_q)) \cup e(K(G_q))$ we now set $V_q(a) = \{v \in [q]: \kappa(v)=a\}$. 
Then the set 
\begin{equation}\label{eq:partition}
\bV_q=
\{ 
V_q(a),a \in v(K(G_q)) \cup e(K(G_q)) 
\} 
\end{equation}
is a partition of $v(G_q)=[q]$. For each $a \in v(K(G_q)) \cup e(K(G_q))$, we let $T_q(a)$ be the subgraph of $G_q$ spanned by $V_q(a)$.
Also, for $e=xy \in e(K(G_q))$, we write $Z(e,x)$ (resp.\ $Z(e,y)$) for the unique vertex of $T_q(e)$ incident to $x$ (resp.\ to $y$). 

\begin{figure}[htb]
\includegraphics[width=\textwidth]{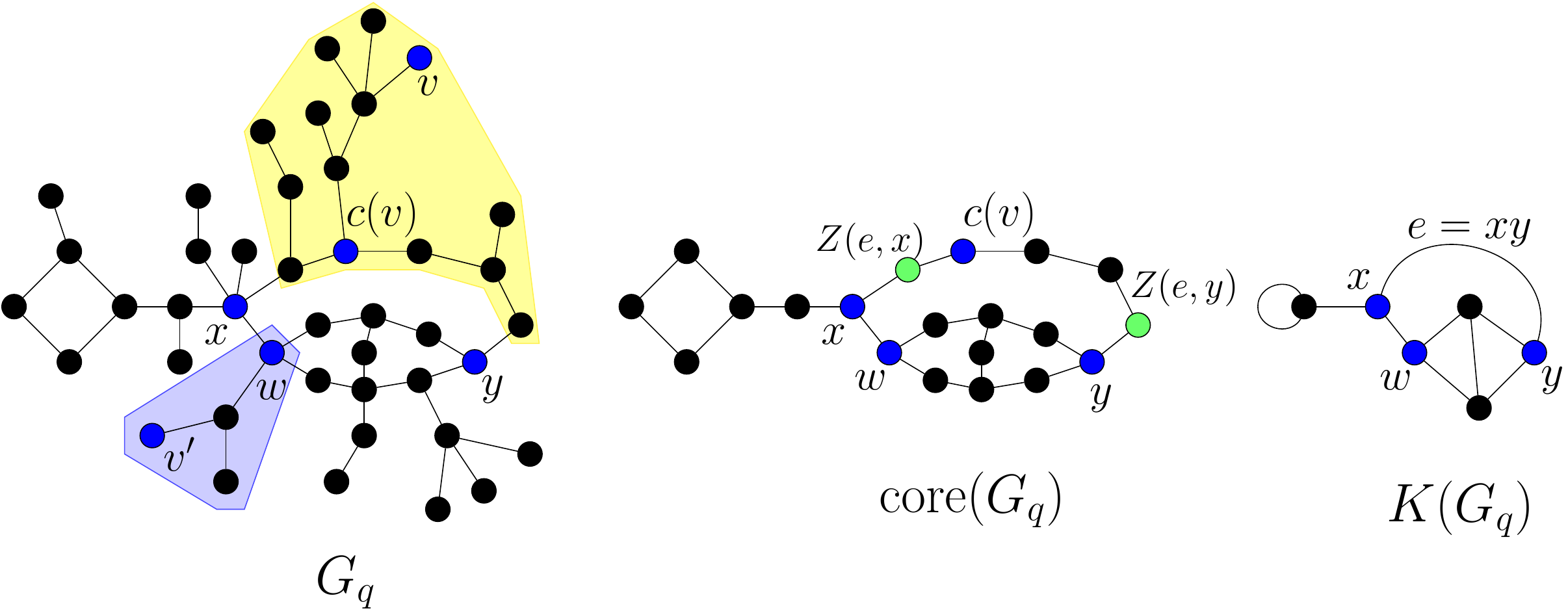}
\caption{Left: An instantiation of graph $G_q$; here $q=43$ and $s=3$. Center: the graph $\core(G_q)$. Right: the kernel $K(G_q)$. In the graph $G_q$, the vertex $v$ has $\kappa(v)=e=xy$ since $c(v)$ lies on the path of $\core(G_q)$ which is contracted to form $e$. The vertex $v'$ has $\kappa(v')=w$ since $c(v)=w$ is a vertex of $K(G_q)$. The trees $T_q(e)$ and $T_q(w)$ are highlighted in yellow and in blue, respectively. In the center, the vertices $Z(e,x)$ and $Z(e,y)$ are green.}
\label{fig:kernel_decomp}
\end{figure}

By the definition of the core, $T_q(a)$ is necessarily a tree. By the symmetry of the model, conditionally given the partition $\bV_q$ in \eqref{eq:partition}, the trees $(T_q(a),a \in v(K(G_q)) \cup e(K(G_q)))$ are independent and each is a uniformly random tree on its vertex set. Moreover, also by symmetry, for each $e \in e(K(G_q))$, conditionally given both $\bV_q$ and the tree $T_q(e)$, the vertices $Z(e,u)$ and $Z(e,v)$ are independent uniformly random elements of $V_q(e)$. 

The next proposition describes the asymptotic structure of the partition of mass in \eqref{eq:partition}. 
For each positive integer $k$, let 
\[
\Delta_k = \{(x_1,\ldots,x_k)\in (0,1)^k: x_1+\ldots+x_k=1\} 
\]
denote the $(k-1)$-dimensional simplex. Then 
for $(\alpha_1,\ldots,\alpha_k)\in \Delta_k$, the Dirichlet$(\alpha_1,\ldots,\alpha_k)$ distribution on $\Delta_k$ has density 
\[
\frac{\Gamma(\alpha_1+\ldots+\alpha_k)}{\Gamma(\alpha_1)\cdot\ldots\cdot \Gamma(\alpha_k)}\prod_{j=1}^k x_j^{\alpha_j-1} 
\]
with respect to $(k-1)$-dimensional Lebesgue measure on $\Delta_k$. 
\begin{prop}[\cite{AddBroGol12} Theorem 22, \cite{abbr2010} Theorem 6 (c)]\label{prop:dirichlet_limit} 
Fix $s \ge 1$ and let $G_q$ be uniformly distributed over the set of connected graphs with vertex set $[q]$ and surplus~$s$. 
Then as $q \to \infty$, the vector 
\[
(q^{-1}V_q(e),e \in e(K(G_q))) 
\] 
converges in distribution to a Dirichlet$(1/2,\ldots,1/2)$ random vector of length $k=2s-1+\I{s=1}$. 
\end{prop}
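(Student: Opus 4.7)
The plan is direct enumeration conditional on the combinatorial structure of $K(G_q)$. For fixed surplus $s$ there are only finitely many isomorphism types of kernel multigraph, and a routine count shows that the dominant type --- contributing probability $1-o(1)$ --- is the ``generic'' kernel whose edge count equals the value $k$ appearing in the statement; non-generic kernels (with higher-degree vertices or extra loops) contribute negligibly, because each additional coincidence reduces the polynomial order of the enumeration by a factor of order $q^{1/2}$.

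Conditional on the kernel shape $K$ together with the identities of its vertices inside $[q]$, the graph $G_q$ is reconstructed from (i) the partition $\bV=(V_q(a):a\in v(K)\cup e(K))$ of $[q]$; (ii) for each $w\in v(K)$, a labelled tree on $V_q(w)$ rooted at $w$; and (iii) for each $e=xy\in e(K)$, a labelled tree on $V_q(e)$ equipped with an ordered pair of attachment vertices $(Z(e,x),Z(e,y))$. By Cayley's formula and its two-marked-vertex extension, the counts at step~(ii) and step~(iii) are $n_w^{n_w-1}$ and $n_e^{n_e}$ respectively, where $n_a=|V_q(a)|$. Hence, conditionally on the kernel shape, the probability that $|V_q(a)|=n_a$ for all $a$ is proportional to
\[
\frac{(q-|v(K)|)!}{\prod_{w\in v(K)}(n_w-1)!\,\prod_{e\in e(K)}n_e!}\,\prod_{w\in v(K)}n_w^{n_w-1}\,\prod_{e\in e(K)}n_e^{n_e}.
\]
Applying Stirling's approximation in the forms $n^{n-1}\sim n!\,e^n/(n\sqrt{2\pi n})$ and $n^n\sim n!\,e^n/\sqrt{2\pi n}$, the factorials and exponentials cancel (since $\sum_a n_a=q$), and setting $n_a=qx_a$ one finds that the induced density on the simplex is asymptotically proportional to $\prod_{w\in v(K)}x_w^{-3/2}\prod_{e\in e(K)}x_e^{-1/2}$.

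The factor $x_w^{-3/2}$ is non-integrable at $x_w=0$, which forces the kernel-vertex masses to concentrate at zero in the limit: $V_q(w)/q\to 0$ in probability for each $w$. After integrating these coordinates out, the marginal density on the edge-indexed simplex is proportional to $\prod_e x_e^{-1/2}$, which is exactly the Dirichlet$(\tfrac12,\dots,\tfrac12)$ density of the required length. The main obstacle will be making the Stirling expansion rigorous uniformly in $(n_a)$ and in particular handling the small-$n_w$ regime (the kernel-vertex masses actually live at scale roughly $q^{1/2}$); I would handle this by a two-scale analysis, checking that the contribution of the small-$n_w$ region to the normalizing constant is subdominant along the vertex coordinates and that marginalizing leaves the edge distribution undisturbed. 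Automorphism factors of $K$ and the number of embeddings of $v(K)$ into $[q]$ cancel between numerator and normalization.
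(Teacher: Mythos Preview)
The paper does not prove this proposition; it is quoted from \cite{AddBroGol12} and \cite{abbr2010}, so there is no in-paper argument to compare against. Your sketch is essentially the argument given in those references: restrict to the generic (cubic) kernel, enumerate graphs with a prescribed mass partition via Cayley-type counts, apply Stirling, and read off a Dirichlet density on the edge coordinates after showing the vertex masses collapse. The overall strategy is sound, and you correctly flag the genuine technical issue, namely that the formal density is non-integrable in the vertex coordinates and a two-scale analysis (vertex masses living at scale $q^{1/2}$) is required.

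One slip to fix: the number of labelled trees on $V_q(w)$ with $w$ already specified is $n_w^{n_w-2}$, not $n_w^{n_w-1}$ (specifying the root adds no freedom when the root is fixed). With the correct count, Stirling gives
\[
\frac{n_w^{n_w-2}}{(n_w-1)!}=\frac{n_w^{n_w-1}}{n_w!}\sim \frac{e^{n_w}}{\sqrt{2\pi}\,n_w^{3/2}},
\]
which is where your claimed $x_w^{-3/2}$ comes from. Your written count $n_w^{n_w-1}/(n_w-1)!$ would instead give $x_w^{-1/2}$, which is integrable and would \emph{not} force the vertex masses to zero---so this is not merely cosmetic. Since you state the correct exponent $-3/2$, this looks like a transcription error rather than a conceptual one, but do correct it.

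A secondary remark: your argument naturally produces a Dirichlet vector of length equal to the number of kernel edges, which for the generic cubic kernel with surplus $s\ge 2$ is $3(s-1)$. This agrees with the stated $k=2s-1$ only at $s=2$; for $s\ge 3$ the formula in the proposition appears to be misstated. This does not affect how the proposition is used downstream in the paper (only positivity of the limiting coordinates is needed), but it is worth noting.
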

In the vector in  Proposition~\ref{prop:dirichlet_limit} we may take the edges of $K(G_q)$ to be ordered lexicographically, say, but the precise ordering rule does not play an important role in this paper. 
 
Recall that $X$ and $Y$ are independent, uniformly random elements of $[q]$, independent of $G_q$ and of the ordering $\be$. Then 
\begin{align*}
\p{X \lr{F_q} Y} 
& = \E{\probC{E_q}{F_q}} \\
& \ge \E{\frac{\max(|C|:C\mbox{ is a component of }F_q)^2}{q^2}}\\
& \ge 
\frac{(\E{\max(|C|:C\mbox{ is a component of }F_q)})^2}{q^2}\, ,
\end{align*}
so to accomplish our goal it suffices to show that $\p{X \lr{F_q} Y} \le \eps^2$ if $r$ is large enough. Since $\eps > 0$ was arbitrary, we may as well just show that $\p{X \lr{F_q} Y}<\eps$ for $r$ large. 

Let $A$ be the event that $\kappa(X) \in e(K(G_q))$ and $\kappa(Y) \in e(K(G_q))$, let $B= A \cap \{\kappa(X)=\kappa(Y)\}$ and let $C= A \cap \{\kappa(X)\ne\kappa(Y)\}$. 
By Proposition~\ref{prop:dirichlet_limit}, 
$q^{-1}\sum_{e \in e(K(G_q))} V_q(e) \to 1$ in probability, which implies that
$\p{A}\to 1$ as $q \to \infty$. 
By the same proposition, the limits 
\[
\lim_{q \to \infty} \p{B} =p =1-\lim_{q \to \infty} \p{C}
\]
both exist, and the value $p$ lies strictly between $0$ and $1$. 

Now let $\delta > 0$ be small enough that for $q$ large, 
\[
\p{\min(|V_q(e)|,e \in e(K(G_q))) < \delta q} < \min(p,1-p)\eps/7\, ;
\]
such a value $\delta$ exists by Proposition~\ref{prop:dirichlet_limit}. 
Then by Bayes' formula, for $q$ sufficiently large, 
\begin{equation}\label{eq:edge_trees_big}
\Cprob{\min(|V_q(e)|,e \in e(K(G_q))) < \delta q}{B} < \eps/6
\end{equation}
and
\begin{equation}\label{eq:edge_trees_big_2}
\Cprob{\min(|V_q(e)|,e \in e(K(G_q))) < \delta q}{C} < \eps/6.
\end{equation}

If $\kappa(X)=e=uv$, then any path from $X$ to $Y$ in $F_q$ must either lie within $T_q(e)$ or else must pass through one of $Z(e,u)$ or $Z(e,v)$. It follows that 
\begin{align*}
& \probC{X \lr{F_q} Y}{\bV_q,B} \\
& 
\le 
\probC{X \lr{T_q(e) \cap F_q} Y}{\bV_q,B} 
+ 
\probC{X \lr{T_q(e) \cap F_q} Z(e,u)}{\bV_q,B}
\\
& + \probC{X \lr{T_q(e) \cap F_q} Z(e,v)}{\bV_q,B} \\
& = 
3\probC{X \lr{T_q(e) \cap F_q} Y}{\bV_q,B} 
\, ,
\end{align*}
where for the final equality we have used that 
where we have used that conditionally given $B$ and $\bV_q$, the vertices $Z(e,u)$ and $Z(e,v)$ and $Y$ are all uniformly random elements of $V_q(e)$ independent of $X$, and where we write $T_q(e)\cap F_q$ to mean the subgraph of $T_q(e)$ with edge set $e(T_q(e))\cap e(F_q)$. 

Now note that the path-and-cycle-breaking process on $F_q$, when restricted to $T_q(e)$, removes a superset of the edges that would be removed by running the path-breaking process on $T_q(e)$ with the induced edge ordering. (This holds since removing an edge $e'$ of $T_q(e)$ may separate a pair of elements of $[r]$ one or both of which lie outside of $V_q(e)$; in this case, the edge $e'$ is removed in the path-and-cycle-breaking process. However, $e'$ may not be removed in the path-breaking process, if $e'$ does not separate a pair of elements of $[r]\cap V_q(e)$.) In other words, writing $F_q(e)'$ for the forest obtained by running the path-breaking process on $T_q(e)$ with starting set $[r]\cap V_q(e)$ and edge ordering given by the restriction of $\be$ to $e(T_q(e))$, then $T_q(e)\cap F_q$ is a sub-forest of $F_q(e)'$. It follows that, writing $e=\kappa(X)$, which is also equal to $\kappa(Y)$ when $B$ occurs, we have 
\begin{align}\label{eq:fcp}
\probC{X \lr{F_q} Y}{\bV_q,B} 
& 
\le 
3\probC{X \lr{T_q(e) \cap F_q} Y}{\bV_q,B}
\nonumber\\
& \le
3\probC{X \lr{F_q(e)'} Y}{\bV_q,B}\, .
\end{align}

Now note that if $G$ is a fixed graph with vertex set $[q]$ whose largest connected component has $c$ vertices, and $X$ and $Y$ are independent uniformly random elements of $[q]$, then 
$\p{X \lr{G} Y}\le \tfrac{c}{q}$. 
If $G$ is instead random, then this bound and the tower law give that 
\[
\p{X \lr{G} Y}
\le q^{-1}\E{\max(|C|:C\mbox{ is a component of }F_q)}.
\]
It thus follows from Proposition~\ref{prop:path-breaking fixed r} 
there is $q_0$ such that if $|V_q(e)| \ge q_0$ then the conditional probability on the right of \eqref{eq:fcp} is less than $\eps/12$, so we have 
\[
\probC{X \lr{F_q} Y}{\bV_q,B} < 
3(\eps/12)\I{V_q(\kappa(X))\ge q_0}
+ 3\I{V_q(\kappa(X))< q_0}, 
\]
which together with \eqref{eq:edge_trees_big} yields that for $q$ large enough (and in particular large enough that $\delta q > q_0$), 
\begin{align*}
\probC{X \lr{F_q} Y}{B}
& = \E{\probC{X \lr{F_q} Y}{\bV_q,B}~|~B}\\
&  
(\eps/4) \probC{V_q(\kappa(X))\ge q_0}{B}
+ 3\probC{V_q(\kappa(X))< q_0}{B} \\
& < 3\eps/4 \, .
\end{align*}

A nearly identical proof, but using \eqref{eq:edge_trees_big_2} in place of \eqref{eq:edge_trees_big}, shows that $\probC{X \lr{F_q} Y}{C} < 3\eps/4$ for all $q$ sufficiently large. (In fact, in this case we could obtain a slightly better bound, since when $C$ occurs, in order for $X$ and $Y$ to be connected in $F_q$ there must be a path from $X$ to $Z(e,u)$ or $Z(e,v)$ in $T_q(e)\cap F_q$; the term $X \lr{T_q(e)\cap F_q} Y$ does not appear.) Since if $A$ occurs then either $B$ or $C$ must occur, it follows that 
\begin{align*}
\p{X \lr{F_q} Y}  
& \le \p{X \lr{F_q} Y,B} + \p{X \lr{F_q} Y,C} + \p{A^c} \\
& \le \frac{3\eps}{4} + \p{A} < \eps
\end{align*}
the last two inequalities holding for all $q$ sufficiently large. This completes the proof of Proposition~\ref{prop:pacb} in the case $s >0$. 

\section{Conclusion}\label{sec:conc}

In addition to the conjectures raised directly after the statement of Theorem~\ref{thm:main}, there are numerous avenues for future research suggested by the current work.

First, we expect that a version of the dichotomy established in Theorem~\ref{thm:main} should hold for other high-dimensional random graphs, at least those with sufficient symmetry. For example, we expect that the same theorem should hold if $K_n$ is replaced by a uniformly random $d$-regular graph (for $d \ge 3$), or by the nearest-neighbour hypercube $\{0,1\}^N$ with $2^N \asymp n$. A version of the theorem may well also hold in high-dimensional lattice tori (i.e.\ with $K_n$ replaced by $(\Z/m\Z)^d$, where $m^d \asymp n$, with $d$ fixed and large). However, in Euclidean settings there is less symmetry; nearby sources are in more direct competition than far-off sources, and it is not clear to us how substantially this will affect the behaviour of the multi-source invasion process.

The behaviour in low-dimensional settings is of course also interesting. It's possible that  enough is known about two-dimensional critical percolation (at least on the triangular lattice \cite{MR3790067}) to be able to make some progress on the structure of multi-source invasion percolation.

Our results suggest the following behaviour for multi-source invasion percolation on large conditioned critical Bienaym\'e trees\footnote{We follow the terminological suggestion of \cite{ab21universal}, using the term ``Bienaym\'e trees'' rather than ``Galton-Watson trees'' for the family trees of branching processes.} with finite variance offspring distribution. For such trees, invasion percolation from boundedly many sources (i.e.\ with $k(n)=k$ fixed) will result in all components having macroscopic sizes which are random to first order; on the other hand, invasion percolation from unboundedly many sources (i.e.\ with $k(n) \to \infty$) will with high probability result in all components of sublinear size. This can likely be proved in detail using weak convergence arguments similar to those used to study the ``Markov chainsaw'' in \cite{MR3262504}. In both cases, it would be would be of interest to understand the distribution of component sizes; in the case of unboundedly many sources, the precise behaviour of the size of the largest connected component is unclear to us, and may depend more sensitively on the offspring distribution, at least if $k(n) \to \infty$ sufficiently quickly. 

It is less clear to us what should happen for conditioned critical Bienaym\'e trees with infinite variance (e.g.\ stable trees). In this setting, the presence of hubs -- nodes with very large degree - could play an important role in the dynamics of the invasion process. 

For other models of random trees and networks (e.g.\ preferential attachment networks, inhomogeneous random graphs, or networks with community structure, or any sort of directed models), the subject is wide open.

\addtocontents{toc}{\SkipTocEntry} 
\section{Acknowledgements}
The authors thank Ross Kang for pointing out the paper \cite{logan18variant}. 
During the preparation of this research, LAB was supported by an NSERC Discovery Grant and a Simons Fellowship in Mathematics.

\small 

\bibliographystyle{plainnat}
\bibliography{gnpf}

\end{document}